\DeclareFontFamily{U}{mathx}{\hyphenchar\font45}
\DeclareFontShape{U}{mathx}{m}{n}{
	<5> <6> <7> <8> <9> <10>
	<10.95> <12> <14.4> <17.28> <20.74> <24.88>
	mathx10
}{}
\DeclareSymbolFont{mathx}{U}{mathx}{m}{n}
\DeclareMathAccent{\widecheck}{0}{mathx}{"71}
\newcommand{\x}{\mathbf{x}}
\newcommand{\y}{\mathbf{y}}
\newcommand{\z}{\mathbf{z}}
\newcommand{\w}{\mathbf{w}}
\newcommand{\X}{\mathbf{X}}
\newcommand{\Y}{\mathbf{Y}}
\newcommand{\W}{\mathbf{W}}
\newcommand{\RR}{\mathbb{R}}
\newcommand{\PP}{\mathbb{P}}
\newcommand{\EE}{\mathbb{E}}
\newcommand{\Det}{\operatorname{Det}}
\newcommand{\CM}{\operatorname{CM}}
\newcommand{\EMTPtwo}{$ \text{EMTP}_2 $ }
\newcommand{\ph}{\operatorname{ph}}
\newcommand{\indep}{\perp \!\!\! \perp}
\newcommand{\bs}{\boldsymbol}
\DeclareMathOperator*{\argmax}{\operatorname{arg max}}
\DeclareMathOperator{\diag}{diag}
\newtheorem{thm}{Theorem}[section]
\newtheorem{prop}[thm]{Proposition}
\newtheorem{lemma}[thm]{Lemma}
\newtheorem{algorithm}[thm]{Algorithm}
\theoremstyle{definition}
\newtheorem{ex}[thm]{Example}
\begin{document}
\title{Parametric and nonparametric symmetries in graphical models for extremes}

\author[F.~R\"ottger]{Frank R\"ottger$^1$}
\address[]{$^1$Research Center for Statistics\\Université de Genève\\ 1205 Geneva\\Switzerland}
\email{frank.roettger@unige.ch}

\author[J.~I.~Coons]{Jane Ivy Coons$^2$}
\address[]{$^2$St. John's College and Mathematical Institute, University of Oxford, United Kingdom}
\email{jane.coons@maths.ox.ac.uk}

\author[A.~Grosdos]{Alexandros Grosdos$^3$}
\address[]{$^3$Department of Mathematics, TUM School of Computation, Information and Technology, Technical University of Munich, Germany and  Institute of Mathematics, University of Augsburg, Germany }
\email{alex.grosdos@tum.de}
 
	\date{\today}

\keywords{Graphical models, Multivariate extremes, Parameter symmetries, Colored Graphs}

\makeatletter
\@namedef{subjclassname@2020}{\textup{2020} Mathematics Subject Classification}
\makeatother
\subjclass[2020]{Primary: 62H22 ; Secondary: 60G70, 62G32}
       \begin{abstract}
       Colored graphical models provide a parsimonious approach to modeling high-dimen- sional data by exploiting symmetries in the model parameters. In this work, we introduce the notion of coloring for extremal graphical models on multivariate Pareto distributions, a natural class of limiting distributions for threshold exceedances. Thanks to a stability property of the multivariate Pareto distributions, colored extremal tree models can be defined fully nonparametrically. For more general graphs, the parametric family of H\"usler--Reiss distributions allows for two alternative approaches to colored graphical models. We study both model classes and introduce statistical methodology for parameter estimation. It turns out that for H\"usler--Reiss tree models the different definitions of colored graphical models coincide. In addition, we show a general parametric description of extremal conditional independence statements for H\"usler--Reiss distributions. Finally, we demonstrate that our methodology outperforms existing approaches on a real data set.
      \end{abstract}

\maketitle
\section{Introduction}

Featuring heavily in applications,
graphical models are statistical models that capture conditional independencies among random variables 
by positing a structure based on a graph.
Modern statistical practice often involves inferring information from high-dimensional data
with relatively few samples, which makes the use of additional structure necessary.
A parsimonious approach to such a complexity reduction is to impose affine restrictions on graphical models. 
For parametric graphical models where each edge corresponds to a parameter, a natural choice are affine restrictions where parameters are assumed to be equal.
These models allow an elegant representation using colored graphs, which was introduced by \citet{HL2005,LauritzenColoured} for Gaussian graphical models. For example for a multivariate Gaussian that is Markov to the 4-cycle, the coloring in Figure~\ref{fig:4-cycle} would impose the affine restriction  $K_{14}=K_{23}$ on its concentration matrix $K$. 

Since their introduction, colored Gaussian graphical models (also referred to as graphical models with symmetries) have been heavily studied, both from a theoretical as well as an applied standpoint. For example, in \citet[Section 5]{FL2015}, the authors study the score matching estimator (SME) for Gaussian graphical models with and without symmetry. They propose the SME as a method for model selection in the colored case where the graphical lasso cannot be used. A necessary and sufficient algebraic-geometric condition for the existence of the maximum likelihood estimator (MLE) in colored graphical models was given in \cite{uhler2011}. Moreover, adding symmetries to the underlying graph can lower the minimal number of data points needed to guarantee the existence of the MLE (\cite{MRS21, uhler2018}). Coloring the edges and vertices in the graph reduces the number of parameters in the model which makes computations more tractable for large networks. For this reason, colored graphical models are popular for the analysis of gene coexpression networks (\cite{toh2002, vinciotti2016}).

In multivariate extreme value theory there is particular interest in sparse high-dimensional models as extreme observations are usually scarce.
The recent introduction of extremal conditional independence by \cite{EH2020} for multivariate Pareto distributions, which are the limit distribution for threshold exceedances, yields a parsimonious framework for the definition of sparse models in multivariate extremes via graphical modeling.
This has lead to a new line of research, see for example \cite{EV2020} for the introduction of nonparametric extremal tree models, \cite{ELV2021,LO2023} for approaches in structure learning, or \cite{REZ2021,RS2022} for the study of positive dependence in graphical extremes. A recent review article regarding sparsity in multivariate extremes is \cite{EI2021}.

In this paper we develop extremal colored graphical models to obtain further reduction in dimension and complexity.
For extremal tree models, which can be constructed from a vector of independent univariate random variables that are identified with the edges of a directed version of the underlying tree \citep{EV2020}, this allows to define nonparametric colored tree models (see Section~\ref{sec:def_col_tree}).
Thus a colored tree as in Figure~\ref{fig:claw} gives rise to a colored extremal tree model by imposing that within each edge color classes, all univariate random variables are identically distributed.

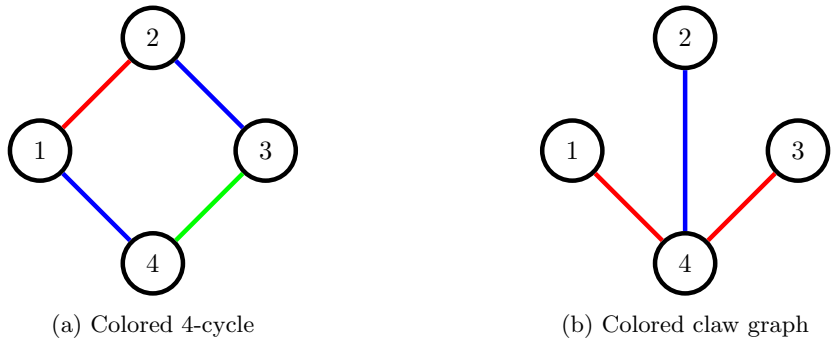
\begin{figure}
    \centering
    \begin{subfigure}{0.45\textwidth}
         \centering
   	\begin{tikzpicture}[line width=.6mm]
			\node[minimum size=8mm,shape=circle,draw=black] (1) at (-1.5,0) {1};
			\node[minimum size=8mm,shape=circle,draw=black] (2) at (0,1.5) {2};
			\node[minimum size=8mm,shape=circle,draw=black] (3) at (1.5,0) {3};
			\node[minimum size=8mm,shape=circle,draw=black] (4) at (0,-1.5) {4};
			\path[color=red] (1) edge (2);
			\path[color=blue] (2) edge (3);
			\path[color=green] (3) edge (4);
			\path[color=blue] (1) edge (4);
		\end{tikzpicture}
    \caption{Colored 4-cycle}\label{fig:4-cycle}
 \end{subfigure}  
    \begin{subfigure}{0.45\textwidth}
       \centering
   	\begin{tikzpicture}[line width=.6mm]
				\node[minimum size=8mm,shape=circle,draw=black] (1) at (-1.5,0) {1};
			\node[minimum size=8mm,shape=circle,draw=black] (2) at (0,1.5) {2};
			\node[minimum size=8mm,shape=circle,draw=black] (3) at (1.5,0) {3};
			\node[minimum size=8mm,shape=circle,draw=black] (4) at (0,-1.5) {4};
			\path[color=red] (1) edge (4);
			\path[color=blue] (2) edge (4);
			\path[color=red] (3) edge (4);			
		\end{tikzpicture}
    \caption{Colored claw graph}\label{fig:claw}
 \end{subfigure}    
 \caption{Examples for colored graphs with four nodes.}
\end{figure}

Among multivariate Pareto distributions, the parametric H\"usler--Reiss family \citep{HR1989} can be seen as an extremal analogue of the multivariate Gaussian.
A $d$-variate H\"usler--Reiss vector $\Y$ is parameterized by a variogram matrix $\Gamma$ that is in a one-to-one correspondence with a signed Laplacian matrix $\Theta$, that is a symmetric positive semidefinite matrix with vanishing row and column sums.
The matrices $\Gamma$ and $\Theta$ can be seen as the analogues of the covariance matrix $\Sigma$ and concentration matrix $K$ in the Gaussian case.
Indeed, \cite{HES2022} show that zeros in $\Theta$ encode the extremal graphical structure, i.e.~$Y_i$ is extremal conditionally independent of $Y_j$ given the remaining variables $\Y_{\setminus ij}$ ($Y_i\perp_eY_j|\Y_{\setminus ij}$) if and only if $\Theta_{ij}=0$.
In Section~\ref{sec:extrCI} we show that this parametric encoding extends to smaller conditioning sets. 
In Proposition~\ref{prop:CIviaGamma} we derive that for non-empty conditioning sets $C$ the equivalence
\[Y_i\perp_eY_j|\Y_{C}\Longleftrightarrow \det \CM(\Gamma)_{iC(d+1),jC(d+1)}=0, \]
holds, where $\CM(\Gamma)=\left(\begin{smallmatrix}
    -\Gamma/2 & \mathbf{1}\\
    -\mathbf{1}^T&0\\
\end{smallmatrix}\right)$
is the Cayley--Menger matrix \citep{devriendt2020effective}. This result can be used to translate statements involving extremal conditional independences into polynomial equations.
This allows us to employ techniques from algebraic statistics,
similarly to what has been previously done for Gaussian graphical models,
see for instance \cite{GeometryML12}.
We plan to unfold these results for the algebraically minded reader in a separate upcoming paper.

Through the relation between graphical structure and vanishing entries in $\Theta$, parametric restrictions allow the definition of colored H\"usler--Reiss graphical models. In Section~\ref{sec:def_RCON} we introduce H\"usler--Reiss restricted concentration (RCON) models, where edge colorings impose parametric constraints in the concentration matrix~$\Theta$.
For example the graph in Figure~\ref{fig:4-cycle} gives the affine constraint $\Theta_{14}=\Theta_{23}$.
In Section~\ref{sec:def_RVAR} we introduce H\"usler--Reiss restricted variogram (RVAR) models that capture parametric constraints in the variogram matrix~$\Gamma$.
Hereby the graph in Figure~\ref{fig:4-cycle} gives the affine constraint $\Gamma_{14}=\Gamma_{23}$.
Variogram models with parametric assumptions have been a standard tool in multivariate extremes for spatial applications, see for example in \cite{EMK2015,ADE2015}.
The entries in $\Gamma$ can be interpreted geometrically as squared Euclidean edge lengths in a simplex, see e.g.~\citet{devriendt2020effective}. Therefore RVAR constraints imply equilateral faces in the corresponding simplex.

When the underlying univariate random variables of a colored extremal tree model are chosen to be Gaussian, this yields a colored H\"usler--Reiss tree model.
We show in Proposition~\ref{prop:colHRtree} that these models have a direct link to RVAR and RCON models. 
In fact, we find that for H\"usler--Reiss tree models the definitions of RVAR, RCON and colored extremal tree models coincide.

In Section~\ref{sec:surrogate_lik} we develop statistical methodology for H\"usler--Reiss RCON and RVAR models.
For RCON models, we employ a surrogate maximum likelihood estimation approach. Hereby the original H\"usler--Reiss log-likelihood is substituted by a degenerate Gaussian log-likelihood \citep{REZ2021}. This relates to optimization problems in machine learning, in particular in graph signal processing, see e.g.~\cite{EHPO2017}.
We compute the score functions and information matrix of the degenerate Gaussian log-likelihood and implement a scoring algorithm to obtain a surrogate maximum likelihood estimate under RCON constraints.
For RVAR models, a similar surrogate maximum likelihood estimation approach is complicated by the highly nonlinear parameter restrictions imposed on the variogram matrix $\Gamma$, in addition to the graphical model constraints on $\Theta$. 
Therefore we follow the approach of \cite{RS2022} to use the mixed dual estimator of \cite{LZ2022} to incorporate constraints on both $\Gamma$ and $\Theta$. 
This leads to a two-step surrogate mixed dual estimator, where the second step is solved through a reciprocal scoring algorithm.

We illustrate our findings on real-world data. 
We test H\"usler--Reiss RCON and RVAR models on flight delay data from the United States \citep{HES2022} in Section~\ref{sec:flights}.
Splitting the data into training and validation data sets for four regional clusters in the United States, we compute an estimate for up to 25 edge color classes based upon an estimated graphical model.
We evaluate the H\"usler--Reiss log-likelihood on the validation dataset and observe a good fit for RCON estimates and an excellent fit for RVAR estimates in comparison to the underlying graphical model, while providing a drastic reduction in the number of parameters.

The \texttt{R} files with implementations of the scoring algorithms and the code for the application are available as supplementary material in a public \texttt{Github} repository \url{https://github.com/frank-unige/SymmetriesGraphicalExtremes}.

\section{Preliminaries}
\subsection{Graphical models in extremes}
Let $\X=(X_1,\ldots, X_d)$ be a $d$-variate random vector.
When interest is in the tail behavior of $\X$, the extremal correlation coefficient
\[\chi_{ij}:=\lim_{p\to 0}\PP(F_i(X_i)>1-p|F_j(X_j)>1-p)\]
given marginal distributions $F_i$ measures extremal dependence between $X_i$ and $X_j$ whenever the limit exists.
Hereby we differentiate between asymptotic dependence ($\chi_{ij}>0$) and asymptotic independence ($\chi_{ij}=0$).
In the setting of asymptotic dependence, the approach of threshold exceedances is particularly attractive for modeling (sparse) extremal dependence structures due to the recent introduction of extremal conditional independence and graphical models \citep{EH2020}.
As the marginal distributions $F_i$ do not influence the multivariate dependence structure, we can assume that all marginals $X_i$ follow a standard exponential distribution.
Under the assumption of multivariate regular variation \citep{res2008}, the limiting distribution of threshold exceedances
\begin{align}
\PP(\Y\le\y):=\lim_{u\to\infty}\PP(\X-u\mathbf{1}\le \y \ | \ \|\X\|_{\infty}>u)\label{eq:MPD}
\end{align}
is a \textit{multivariate (generalized) Pareto distribution} \citep{roo2006} for any $\X$ in the domain of attraction of $\Y$.
Hereby conditioning on the maximum of $\X$ to exceed a high threshold $u$ means that there should be an exceedance in at least one coordinate of $\X$. Given such an exceedance, we are interested in the distribution of the difference $\X-u\mathbf{1}$ for the limit $u\to \infty$.
By this construction, a multivariate Pareto distribution $\Y$ is supported on an L-shaped space $\mathcal{L}:=\{\x\in\RR^d:\|\x\|_{\infty}>0\}$, as we condition on at least one coordinate of $\X$ exceeding $u$.

When interest is in dependence modeling, we require the limit \eqref{eq:MPD} for marginal distributions of $\X$.
For a marginal $\X_I$ for some $I\subset [d]:=\{1,\ldots,d\}$, the resulting limit in \eqref{eq:MPD} is the $I$-th marginal of $\Y$ conditioned on exceeding zero in at least one dimension indexed by $I$, see \citet{HES2022}.
Although this is not the $I$-th marginal of $\Y$, we will nonetheless refer to this limit as $\Y_I$ for both convenience of notation and intelligibility. 

The support $\mathcal{L}$ of a multivariate Pareto distribution is not a product space, and therefore classical notions of stochastic independence are not applicable.
A parsimonious approach to this problem was proposed by \citet{EH2020}. Let
$\Y^k:=\Y|\{Y_k>0\},\; k\in[d]$
be conditional random vectors that are supported on half-spaces $\mathcal{L}^k:=\{\x\in\mathcal{L}:x_k>0\}$.
Note that the union of these half-spaces $\cup_{k\in[d]}\mathcal{L}^k=\mathcal{L}$ recovers the support of the multivariate Pareto vector $\Y$.
This structure permits a natural definition of extremal conditional independence, where for disjoint index sets $A,B,C\subset [d]$ we say that $\Y_A$ is extremal conditionally independent of $\Y_B$ given $\Y_C$ when for all $k\in[d]$ we have
that $\Y^k_A\indep\Y^k_B|\Y^k_C.$
We abbreviate this extremal conditional independence statement as $\Y_A\perp_e\Y_B|\Y_C$.

This notion of extremal conditional independence has appealing properties, see for example \citet[Section~4.2]{REZ2021}.
Moreover, it allows for a straightforward definition of extremal graphical models.
For an undirected graph $G=(V,\mathcal{E})$ with vertex set $V=[d]$ and edge set $\mathcal{E}\subset V\times V$ we call a multivariate Pareto vector $\Y$ an extremal graphical model when
$\Y_i\perp_e\Y_j|\Y_{\setminus ij}$ for all pairs $ij\not\in \mathcal{E}.$
The conditional independence statement above is also referred to as extremal pairwise Markov property. 
\cite{EH2020} show an extremal Hammersley--Clifford type theorem that provides the equivalence of the extremal pairwise Markov property with an extremal global Markov property and a Markov factorization property for decomposable graphs.

The conditional vectors $\Y^k$ allow a very useful stochastic representation as latent variable models, that is
$\Y^k\stackrel{d}{=}E\mathbf{1}+\W^k,$
where $E$ is a univariate standard exponential random variable.
The $k$th extremal function $\W^k$ is independent of $E$ and satisfies $W^k_k=0$.
This latent variable structure allows to construct multivariate Pareto distribution from a $(d-1)$-dimensional random vector $\W^k_{\setminus k}$ for any $k\in[d]$. Furthermore, $\W^k$ encodes extremal conditional independence when $k\in C$, such that
\begin{align*}
    \Y_i\perp_e\Y_j|\Y_{C}\Longleftrightarrow \W^k_i \indep \W^k_j |\W^k_C.
\end{align*}
This property is especially useful in graphical modeling, for example to construct extremal tree models as in the next section.

\subsection{Extremal tree models}\label{sec:ext_tree}
Multivariate Pareto distributions that are Markov to a tree show a particularly simple structure in the $k$th extremal function.
Given an undirected tree $T=(V,E)$, let $T^k=(V,E^k)$ be a directed tree rooted in $k$ by directing all edges away from $k$.
If $\Y$ is extremal Markov to $T$, the $k$th extremal function allows a representation
\begin{align}
    W_i^k=\sum_{e\in\ph(ki,T^k)} U_e,\label{eq:tree_construction}
\end{align}
for all $i\neq k$ where $\ph(ki,T^k)$ is the unique path between $k$ and $i$ in $T^k$ and $\left(U_e\right)_{e\in E^k}$ are independent random variables.
\citet{EV2020} show that for extremal tree models the extremal variogram 
\begin{align}
    \Gamma^{(k)}=\left( \text{Var} (W^k_i-W^k_j)\right)_{i,j\in V} \label{eq:extremal_variogram}
\end{align}
forms a tree metric, i.e.~$\Gamma^{(k)}_{ij}=\sum_{e\in\ph(ij,T^k)}\Gamma^{(k)}_{e}$.
Assuming $\Gamma^{(k)}_{ij}> 0$ for all $i\neq j$, this yields that the minimum spanning tree given weights $\Gamma^{(k)}_{ij}$ equals $T$.
This means that for extremal tree models the extremal variogram encodes their conditional independences as it directly corresponds to the underlying graphical structure. 
\cite{EV2020} apply this for nonparametric consistent graph structure learning for extremal tree models (under mild assumptions) by consistently estimating the extremal variogram.
\begin{ex}\label{ex:claw}
    For the (uncolored) claw graph on four nodes as in Figure~\ref{fig:claw}, let $k=1$. Then, we compute that the first extremal function is
    $\W^1=(0,U_{14}+U_{42},U_{14}+U_{43},U_{14})^T$
        and the extremal variogram is
\[\Gamma^{(1)}=\begin{pmatrix}
            0&\Gamma_{14}^{(1)}+\Gamma_{24}^{(1)}&\Gamma_{14}^{(1)}+\Gamma_{34}^{(1)}&\Gamma_{14}^{(1)}\\
            \Gamma_{14}^{(1)}+\Gamma_{24}^{(1)}&0&\Gamma_{24}^{(1)}+\Gamma_{34}^{(1)}&\Gamma_{24}^{(1)}\\
            \Gamma_{14}^{(1)}+\Gamma_{34}^{(1)}&\Gamma_{24}^{(1)}+\Gamma_{34}^{(1)}&0&\Gamma_{34}^{(1)}\\
            \Gamma_{14}^{(1)}&\Gamma_{24}^{(1)}&\Gamma_{34}^{(1)}&0\\
        \end{pmatrix}.
    \]
\end{ex}

\subsection{Signed Laplacian-constrained Gaussian graphical models}\label{sec:expfam}
 Let $ G=(V,E) $ be a simple undirected graph with vertex set $ V=\{1,\ldots,d\} $ and edge set $ E\subset V\times V $. We assign edge weights $ Q_{ij} $ for all edges $ ij\in E $ and assume a zero weight for non-edges.
Let $ D $ be a diagonal matrix with $ D_{ii}=\sum_{j\neq i}Q_{ij} $. The signed Laplacian matrix of $ G $ is the difference $\Theta := D -Q$.
When $Q_{ij}>0$ for all $ij\in E$, the matrix $\Theta$ is called a Laplacian matrix.
The terminology \textit{signed} Laplacian matrix stems from the more common restriction to graphs with positive edge weights where we speak of Laplacian matrices. In this work, we will allow for both positive and negative edge weights.
Note that $D$ is referred to as diagonal degree matrix and $Q$ as weighted adjacency matrix of the graph.
Clearly the row and column sums of $\Theta$ must vanish, i.e.~each row or column vector lives in the hyperplane $\mathcal{H}^{d-1}:=\{\x\in\RR^d:\x^T\mathbf{1}=0\}$ that is perpendicular to the all ones vector $\mathbf{1}$.
The (signed) Laplacian matrix is a central object of study in the field of graph theory as it encodes structural data about the underlying graph \citep{brouwer2011}. In particular, the spectrum of a weighted Laplacian matrix can be used to uncover the structure of biological and social networks and complex dynamical systems \citep{bolla2011, poignard2018}.

In this work we investigate the application of Gaussian models with signed Laplacian concentration matrices in graphical extremes.
Such a degenerate Gaussian random vector $\W$ is called a (signed) Laplacian-constrained Gaussian graphical model, which are a very active research topic in the graph signal processing literature, see e.g.~\citet{EHPO2017,YdMCP2020}.
The random vector $\W$ is supported on the hyperplane $\mathcal{H}^{d-1}$ and allows a density
\begin{align*}
	f(\w)&= (2\pi)^{-d/2} \Det(\Theta)^{\frac{1}{2}}\exp\left(-\frac{1}{2}(\w-\mu)^T \Theta (\w-\mu)\right)
\end{align*} 
for $ \w \in \mathcal{H}^{d-1} $, where the concentration matrix $\Theta$ is a positive semidefinite signed Laplacian matrix and $\mu \in \mathcal{H}^{d-1}$ is the mean vector. Here we use $\Det(\Theta)$ to denote the \emph{pseudo-determinant} of $\Theta$, which is the product of its nonzero eigenvalues.
Note that when all edge weights are positive such that $\Theta$ is a Laplacian matrix, we call $\W$ a Laplacian-constrained Gaussian graphical model.
Assuming a fixed mean vector $\mu$, the degenerate Gaussian $\W$ is an exponential family 
with natural parameter $ Q $ and mean parameter $ -\Gamma/2 $ where
$\Gamma_{ij}=\EE\left[(W_i-W_j)^2\right],$
compare \citet{RS2022}.
The matrix $\Gamma$ is a variogram or Euclidean distance matrix, i.e.~its entries correspond to squared Euclidean distances between $d$ points in $\RR^{d-1}$.
The mean parameter $-\Gamma/2$ and the natural parameter $Q$ are connected via a one-to-one transformation, see also \citet{HES2022,devriendt2020effective}.
Let $\Sigma=\Theta^+$ be the pseudo-inverse of the signed Laplacian matrix $\Theta$ of the graph $G$, i.e.~$\Sigma$ is the covariance of $\W$. The variogram matrix is recovered via $\Gamma=d_{\Sigma}\bs 1^T +\bs 1 d_\Sigma^T - 2\Sigma,$
where $d_\Sigma$ is the diagonal of $\Sigma$ in vector notation.
To compute the covariance $\Sigma$ from the variogram $\Gamma$, we define 
$\bs P:=I_d-\tfrac1d\bs 1\bs 1^T$
to be the projection matrix that maps points from $\RR^d$ to $\mathcal{H}^{d-1}$ along the all-ones vector $\mathbf{1}$.
It follows that  $\Sigma = P\left(-\frac{\Gamma}{2}\right)P$.
Let $\mathbb{S}_0^d$ define the space of symmetric $d\times d$-matrices with zero diagonal.
The natural parameter space of the exponential family $f(\w)$ is given by $\mathcal{Q}=\{Q\in \mathbb{S}_0^d:\;\int_{\mathcal{H}^{d-1}}f(\w)d\w<\infty \}$.
By Kirchhoff's matrix tree theorem the log-partition function equals 
$A(Q)=-\frac{1}{2}\log \sum_{T\in \mathcal{T}}\prod_{ij\in T} Q_{ij}, $
 where $ \mathcal{T} $ is the set of all spanning trees. 

Assume $ n $ independent observations $ \w_1,\ldots,\w_n $ of $\W$. 
We obtain a sample variogram $\widetilde{\Gamma}_{ij}= \frac{1}{n}\sum_{k=1}^{n} (w_{ki}-w_{kj})^2 $
as the summary statistic of the exponential family $f(\w)$.
This gives rise to the log-likelihood
\begin{align}
		\ell(Q;\widetilde{\Gamma})&:=-\frac{1}{2} \langle \widetilde{\Gamma}, Q \rangle-A(Q)=\frac{1}{2}\log \sum_{T\in \mathcal{T}}\prod_{ij\in T} Q_{ij} - \frac{1}{2}\langle \widetilde{\Gamma},Q \rangle
\end{align}	
with $ \langle \Gamma,Q \rangle:= \sum_{i<j} \Gamma_{ij} Q_{ij} $.

Maximum likelihood estimation for exponential families is equivalent to minimizing the Kullback-Leibler divergence \citep[Remark~6.4]{Brown1986}. 
To compute the Kullback--Leibler divergence for the given exponential family, we require the Fenchel conjugate of the log-partition function $ A(Q) $. This equals 
$
A^*(\Gamma)= \sup_{Q\in \mathbb{S}_0^d}\ell(Q;\Gamma)=-\frac{d-1}{2}-\frac{1}{2}\log\det\CM(\Gamma),
$
where
\begin{align}
    \CM(\Gamma):=\left(\begin{array}{ll}
       -\frac{\Gamma}{2}&\bs 1\\
       -\bs 1^T&0\\
        \end{array}\right).\label{eq:CM}
\end{align}
is the Cayley--Menger matrix \citep{devriendt2020effective}.

Assume two signed Laplacian-constrained Gaussian graphical models with mean parameters $\Gamma_1,\Gamma_2$ and natural parameters $Q_1,Q_2$.
This gives rise to a closed formula for the Kullback--Leibler (KL) divergence between these two distributions \citep[Proposition~6.3]{Brown1986} that is given by
\begin{align}\label{kldivergence}
	\textrm{KL}(\Gamma_1, Q_2) &= \frac{1}{2} \langle \Gamma_1, Q_2 \rangle + A^*(\Gamma_1) + A(Q_2).
\end{align}
Maximizing the log-likelihood with respect to sample variogram $\tilde{\Gamma}$ is equivalent to minimizing $\textrm{KL}(\widetilde{\Gamma}, Q)$ in $Q$.
The reciprocal operation, i.e.~minimizing the KL-divergence $\textrm{KL}(\Gamma, \widetilde{Q})$ in $\Gamma$, yields a different convex optimization problem that is known as dual or reciprocal maximum likelihood estimation. 
The dual log-likelihood function is the negative of the terms in \eqref{kldivergence} that depend on $\Gamma_1$, such that
\begin{align}
 \widecheck{\ell}(\Gamma; \widetilde{Q})&:=A^*(\Gamma)-\langle {\Gamma}, \widetilde{Q} \rangle\;\propto\; \frac{1}{2}\log\det\CM(\Gamma)-\frac{1}{2}\langle {\Gamma}, \widetilde{Q} \rangle.\label{eq:dual_loglik}    
\end{align}	
The dual log-likelihood allows for simple handling of convex constraints in the mean parameter $\Gamma$, while for the log-likelihood this is true for convex constraints in the natural parameter $Q$. This structure is exploited in mixed dual estimation as introduced by \cite{LZ2022}, which we will briefly explain below for our setting.

Assume a graph $G=(V,E)$.
According to \citet{BN1978}, the mixed parameterization $\left((\Gamma_{ij})_{ij\in E}, (Q_{ij})_{ij\in E^c}\right)$ is a valid parameterization of the exponential family, compare also \cite{LZ2022} for details on mixed parameterizations.
The mixed parameterization allows for a mixed dual estimator (MDE) that solves two consecutive convex problems that minimize the Kullback--Leibler divergence in two steps.
Let $ C_E\subseteq M $ denote convex constraints on $ (\Gamma_{ij})_{ij\in E} $ and $ C_{E^c}\subseteq \mathcal{Q} $ convex constraints on $ (Q_{ij})_{ij\in E^c} $.
The MDE is a two-step estimation procedure such that
	\begin{enumerate}
		\item the first step~(S1) is defined as
		\[\widehat{Q}=\argmax_{Q \in C_{E^c}} \ell(Q;\widetilde{\Gamma}), \] 
		\item and the second step~(S2) as
		\[\widecheck{\Gamma}=\argmax_{\Gamma\in C_E}\widecheck{\ell}(\Gamma; \widehat{Q}).\]
	\end{enumerate}
The unique optimum of step~(S1) is the maximum likelihood estimator (MLE) for the convex exponential family given by $Q \in C_{E^c}$.
Note that when $C_{E^c}$ yields only affine constraints the MDE constitutes a maximum dual likelihood estimator \citep[Remark~4.7]{LZ2022} as studied for example by \cite{Chr1989}.
In the context of signed Laplacian-constrained Gaussian graphical models,
\citet{RS2022} apply the MDE under affine constraints (in both steps) that encode an extremal notion of positive dependence for H\"usler--Reiss distributions, which are a popular family of multivariate Pareto distributions that we will discuss in the following section.

\subsection{H\"usler--Reiss distributions}\label{sec:prelim_HR}
The H\"usler--Reiss distribution \citep{HR1989} is considered as the Gaussian analogue in multivariate extremes, for example due to similar properties with respect to conditional independence, compare e.g.~\citet{HES2022}.
The H\"usler--Reiss distribution is parameterized by a conditionally negative definite variogram matrix $\Gamma$, i.e.~an element of the parameter set
\[\mathcal{D}^d:=\{\Gamma\in\mathbb{S}^d_0: \x^T\Gamma \x<0 \text{ for all } \x \in \mathcal{H}^{d-1}\setminus \mathbf{0} \}. \]
An elegant construction for the H\"usler--Reiss distribution is via their extremal functions.
For some $\Gamma \in \mathcal{D}^d$ let $\Sigma^{(k)}:=\frac{1}{2}(\Gamma_{ik}+\Gamma_{jk}-\Gamma_{ij})_{i,j\neq k} $. Let $\W^k_{\setminus k}$ be a multivariate Gaussian with mean $ -\text{diag}(\Sigma^{(k)})/2 $ and covariance $\Sigma^{(k)}$. 
Then $\Y$ is H\"usler--Reiss with parameter matrix $\Gamma$, where
\begin{align}
	\begin{cases}
		\Gamma_{ik}=\Sigma^{(k)}_{ii}, &i\neq k,\\
		\Gamma_{ij}=\Sigma^{(k)}_{ii}+\Sigma^{(k)}_{jj}-2\Sigma^{(k)}_{ij}, &i, j\neq k.\\
	\end{cases}\label{eq:sigma2gamma}
\end{align}
Note that by construction, the extremal variogram \eqref{eq:extremal_variogram} is the parameter matrix $\Gamma$ for all $1\le k\le d$. 
\citet{HES2022} introduced a characterization of the H\"usler--Reiss distribution without choosing a particular $k\in [d]$ via the relation
$\Y|\{\Y^T\mathbf{1}>0\}\stackrel{d}{=} E\mathbf{1}+\W.$
If $\Y$ is parameterized by $\Gamma$, then $\W$ is a signed Laplacian-constrained Gaussian graphical model with mean $P(-\Gamma/2)\mathbf{1}$ and concentration matrix $\Theta=D-Q$. 
Note that $\W$ is independent of the standard exponential random variable $E$.
It was shown by \cite{HES2022} that the H\"usler--Reiss concentration matrix $\Theta$ directly encodes extremal conditional independence where we condition on all remaining variables, such that
\begin{align}
    \Y_i\perp_e \Y_j\vert \Y_{\setminus ij} \Longleftrightarrow \Theta_{ij}=0\Longleftrightarrow Q_{ij}=0.\label{eq:HR-precisionCI}
\end{align}
Therefore the extremal graphical structure of a H\"usler--Reiss random vector $\Y$ is encoded in the concentration matrix and the weighted adjacency matrix of the corresponding signed Laplacian-constrained Gaussian graphical model~$\W$.
This enables parametric surrogate likelihood inference for H\"usler--Reiss graphical models, see e.g.~\cite{RS2022} and Section~\ref{sec:surrogate_lik}.

\section{Extremal conditional independence for H\"usler--Reiss distributions}\label{sec:extrCI}

Parametric descriptions of arbitrary extremal conditional independence statements for H\"usler--Reiss distributions are only known for statements via the Gaussian $k$-th extremal function $\W^k_{\setminus k}$.
Furthermore, statements of the form $\Y_i\perp_e \Y_j\vert \Y_{\setminus ij}$ correspond to vanishing entries $\Theta_{ij}=0$ in the H\"usler--Reiss precision matrix.
In this section we introduce a parametric characterization of arbitrary extremal conditional independence statements for H\"usler--Reiss distributions via their parameter matrices.
This is based on the Cayley--Menger matrix 
$ \CM(\Gamma) = \left(\begin{smallmatrix}       
    -\frac{\Gamma}{2}&\bs 1\\
    -\bs 1^T&0\\
\end{smallmatrix}\right), $ 
see \eqref{eq:CM}.
The determinant of this matrix is given by $\det(\CM(\Gamma))=\det(\Sigma^{(k)})$, which further relates to the volume of the simplex spanned by the Euclidean distances stored in $\Gamma$, compare \citet{devriendt2020effective}. 
A key observation for a parametric description of extremal conditional independence for H\"usler--Reiss distributions is the following rephrasing of Fiedler's identity:

\begin{lemma}\label{lem:CMinv}
For a variogram matrix $\Gamma$ in $\mathcal{D}^d$ we can obtain the inverse of its Cayley-Menger matrix by
\begin{equation} 
\label{eq:gammaaugmented}
\CM(\Gamma)^{-1}=\begin{pmatrix}
	\Theta&-\bs r \\
     \bs r^T& -R^2 \\
\end{pmatrix},
\end{equation}
where $\bs r=\tfrac{1}{2}\Theta \xi+\tfrac{1}{d}\bs 1$ with $\xi={\rm diag}(\Sigma)$, and $R=\sqrt{\tfrac{1}{2}\xi^T (\bs r+\tfrac{1}{d}\bs 1)}$.
\end{lemma}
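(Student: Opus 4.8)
The plan is to verify directly that the proposed matrix is the inverse, rather than computing $\CM(\Gamma)^{-1}$ from scratch. Since $\Gamma\in\mathcal{D}^d$, the identity $\det\CM(\Gamma)=\det\Sigma^{(k)}$ recorded above together with positive definiteness of the Gaussian covariance $\Sigma^{(k)}$ shows that $\CM(\Gamma)$ is invertible; hence it suffices to check the single matrix equation $\CM(\Gamma)\,N=I_{d+1}$ for $N:=\left(\begin{smallmatrix}\Theta&-\bs r\\ \bs r^T&-R^2\end{smallmatrix}\right)$. Throughout I will use four facts from Section~\ref{sec:expfam}: the vanishing sums $\Theta\bs 1=0$ and $\bs 1^T\Theta=0$; the relation $\Sigma\bs 1=0$ coming from $\Sigma=\Theta^+$; the pseudo-inverse identity $\Sigma\Theta=\Theta\Sigma=P=I_d-\tfrac1d\bs 1\bs 1^T$ (valid because $\Theta$ is symmetric positive semidefinite with kernel $\mathrm{span}(\bs 1)$); and the variogram--covariance formula $\Gamma=\xi\bs 1^T+\bs 1\xi^T-2\Sigma$ with $\xi=\diag(\Sigma)$, equivalently $-\tfrac{\Gamma}{2}=\Sigma-\tfrac12\xi\bs 1^T-\tfrac12\bs 1\xi^T$.

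Block multiplication splits $\CM(\Gamma)\,N=I_{d+1}$ into four identities. The two bottom blocks are immediate: the $(2,1)$ block is $-\bs 1^T\Theta=0$ by vanishing column sums, and the $(2,2)$ block is $\bs 1^T\bs r=\tfrac12\bs 1^T\Theta\xi+\tfrac1d\bs 1^T\bs 1=0+1=1$. For the $(1,1)$ block I substitute the expression for $-\tfrac{\Gamma}{2}$, drop the $\xi\bs 1^T$ term using $\bs 1^T\Theta=0$, and apply $\Sigma\Theta=P$; this gives $-\tfrac{\Gamma}{2}\Theta+\bs 1\bs r^T=I_d+\bs 1\bigl(\bs r^T-\tfrac12\xi^T\Theta-\tfrac1d\bs 1^T\bigr)$, so the block equals $I_d$ precisely when $\bs r=\tfrac12\Theta\xi+\tfrac1d\bs 1$. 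This simultaneously confirms the $(1,1)$ block and motivates the stated formula for $\bs r$.

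The main computation, and the only step where the bookkeeping can go wrong, is the $(1,2)$ block $\tfrac{\Gamma}{2}\bs r-R^2\bs 1=0$: one must show that $\tfrac{\Gamma}{2}\bs r$ is a scalar multiple of $\bs 1$ and identify that scalar with $R^2$. Expanding $\tfrac{\Gamma}{2}\bs r$ with $\tfrac{\Gamma}{2}=\tfrac12\xi\bs 1^T+\tfrac12\bs 1\xi^T-\Sigma$ and $\bs r=\tfrac12\Theta\xi+\tfrac1d\bs 1$, I again discard every summand containing $\bs 1^T\Theta=0$ or $\Sigma\bs 1=0$, and use $\Sigma\Theta\xi=P\xi=\xi-\tfrac1d(\bs 1^T\xi)\bs 1$. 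The two terms carrying a genuine $\xi$-component, namely $+\tfrac12\xi$ from $\tfrac12\xi\bs 1^T\cdot\tfrac1d\bs 1$ and the $-\tfrac12\xi$ inside $-\tfrac12\Sigma\Theta\xi=-\tfrac12P\xi$, then cancel, leaving only multiples of $\bs 1$. Collecting these yields $\tfrac{\Gamma}{2}\bs r=\bigl(\tfrac14\xi^T\Theta\xi+\tfrac1d\xi^T\bs 1\bigr)\bs 1$.

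It remains to match the bracket with $R^2$. Substituting $\bs r=\tfrac12\Theta\xi+\tfrac1d\bs 1$ into the definition gives $R^2=\tfrac12\xi^T(\bs r+\tfrac1d\bs 1)=\tfrac12\xi^T\bigl(\tfrac12\Theta\xi+\tfrac2d\bs 1\bigr)=\tfrac14\xi^T\Theta\xi+\tfrac1d\xi^T\bs 1$, which is exactly the scalar produced above, so the $(1,2)$ block vanishes. I expect that tracking which rank-one terms survive in this last block will be the most error-prone part; organizing the expansion by first eliminating all summands involving $\bs 1^T\Theta$ or $\Sigma\bs 1$, and only then reducing the $\Sigma\Theta$ products via $P$, keeps the computation transparent and makes the cancellation of the $\xi$-parts visible.
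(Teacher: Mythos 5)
Your proof is correct, but it takes a genuinely different route from the paper. The paper's proof invokes Fiedler's identity \citep[Theorem~2]{devriendt2020effective} as a black box, namely
\[
-\tfrac{1}{2}\begin{pmatrix}\Gamma&\bs 1\\ \bs 1^T&0\end{pmatrix}
=\begin{pmatrix}\Theta&-2\bs r\\ -2\bs r^T&4R^2\end{pmatrix}^{-1},
\]
and then transfers it to the Cayley--Menger normalization by writing $\CM(\Gamma)$ as $\diag(1,\dots,1,2)\left(\begin{smallmatrix}-\Gamma/2&-\bs 1/2\\ -\bs 1^T/2&0\end{smallmatrix}\right)\diag(1,\dots,1,-2)$ and inverting this product; the only delicate point there is that the cited theorem is stated for Laplacians and extends to signed Laplacians via a remark. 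You instead verify $\CM(\Gamma)N=I_{d+1}$ directly by block multiplication, using only the identities $\Theta\bs 1=0$, $\Sigma\bs 1=0$, $\Sigma\Theta=P$, and $\Gamma=\xi\bs 1^T+\bs 1\xi^T-2\Sigma$ already recorded in Section~\ref{sec:expfam} --- in effect giving a self-contained reproof of Fiedler's identity in its CM-normalized form. I checked your block computations (including the cancellation of the $\xi$-components in the $(1,2)$ block and the identification of the resulting scalar with $R^2$) and they are all correct; note also that your preliminary invertibility argument via $\det\CM(\Gamma)=\det\Sigma^{(k)}$ is not even needed, since for square matrices $\CM(\Gamma)N=I$ already forces $N=\CM(\Gamma)^{-1}$. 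The trade-off: the paper's argument is shorter and connects to the literature, while yours is elementary, avoids any question about the scope of the cited theorem for signed Laplacians, and makes explicit exactly which structural facts about $(\Theta,\Sigma,\Gamma)$ the identity rests on.
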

\begin{proof}
By Fiedlers identity \citep[Theorem~2]{devriendt2020effective} we have that
\begin{equation}\label{eq:fiedlerid}
-\frac{1}{2}\begin{pmatrix}
	\Gamma&\bs 1\\
	\bs 1^T &0 \\
\end{pmatrix}\;=\;\begin{pmatrix}
	\Theta&-2\bs r \\
     -2\bs r^T& 4R^2 \\
\end{pmatrix}^{-1},
\end{equation}
where $\bs r=\tfrac{1}{2}\Theta \xi+\tfrac{1}{d}\bs 1$ with $\xi={\rm diag}(\Sigma)$, and $R=\sqrt{\tfrac{1}{2}\xi^T (\bs r+\tfrac{1}{d}\bs 1)}$. Note that \cite{devriendt2020effective} only refers to Laplacian matrices in the theorem, but extends to signed Laplacians (or arbitrary Euclidean distance matrices, respectively) in the subsequent remark.
Because of 
\[\CM(\Gamma)=
\diag(1,\dots,1,2)
\begin{pmatrix}
	-\frac{\Gamma}{2}&-\frac{1}{2}\bs 1\\
	-\frac{1}{2}\bs 1^T &0 \\
\end{pmatrix}
\diag(1,\dots,1,-2)
\]
we obtain
\begin{align*}
    \CM(\Gamma)^{-1}&=
\diag(1,\dots,1,-\frac{1}{2})
\begin{pmatrix}
	\Theta&-2\bs r \\
     -2\bs r^T& 4R^2 \\
\end{pmatrix}
\diag(1,\dots,1,\frac{1}{2}) \\
&=\begin{pmatrix}
	\Theta&-\bs r \\
     \bs r^T& -R^2 \\
\end{pmatrix}.
\end{align*}
\end{proof}
From Fiedler's identity, we derive the following relation for subdeterminants of the covariance of $\W^k$ and subdeterminants of the Cayley--Menger matrix:
\begin{lemma}
For disjoint $i,j,C$ and for $i,j\neq k$ and $k\not\in C$ we have that
\begin{align}
    \det(\Sigma^{(k)}_{iC,jC})&= \det\left(\CM(\Gamma)_{ikC(d+1),jkC(d+1)}\right).
\end{align}
\end{lemma}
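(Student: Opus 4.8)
The plan is to realize $\Sigma^{(k)}$ as a Schur complement of the Cayley--Menger matrix and then to read off the claimed minor identity from the classical formula that expresses any minor of a Schur complement as a bordered minor of the ambient matrix divided by the pivot determinant.

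First I would fix the pivot index set $P=\{k,d+1\}$ and examine the corresponding principal block of $\CM(\Gamma)$. Because $\Gamma_{kk}=0$, this block equals $\left(\begin{smallmatrix} 0&1\\ -1&0\end{smallmatrix}\right)$, which is invertible with determinant $1$. Forming the Schur complement of this block on the complementary index set $[d]\setminus\{k\}$ and simplifying with the polarization identity, a direct computation gives the entries $\tfrac12(\Gamma_{ik}+\Gamma_{jk}-\Gamma_{ij})$, so the Schur complement is exactly $\Sigma^{(k)}$. As a sanity check, taking the complementary block in full recovers the global identity $\det(\CM(\Gamma))=\det(\Sigma^{(k)})$ already recorded above.

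Next I would invoke the standard minor identity for Schur complements: if a matrix $M$ has an invertible pivot block on an index set $P$ and $S$ denotes the resulting Schur complement on the complementary indices, then for equal-size index sets $I,J$ in the complement one has $\det(S_{I,J})=\det\!\big(M_{(P\cup I),(P\cup J)}\big)/\det(M_{PP})$; this follows at once from the block factorization of $M_{(P\cup I),(P\cup J)}$ with pivot $M_{PP}$. Specializing to $I=\{i\}\cup C$, $J=\{j\}\cup C$ and using $\det(\CM(\Gamma)_{PP})=1$ yields $\det(\Sigma^{(k)}_{iC,jC})=\det(\CM(\Gamma)_{(P\cup iC),(P\cup jC)})$, and $P\cup\{i\}\cup C=\{i,k\}\cup C\cup\{d+1\}$ is precisely the row index set $ikC(d+1)$, and analogously for the columns.

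The only genuine obstacle is the sign bookkeeping. The minor formula naturally lists the pivot indices $k,d+1$ first, whereas the statement orders them as $i,k,C,(d+1)$. Passing from the ordering $(k,d+1,i,C)$ to $(i,k,C,d+1)$ on the rows and from $(k,d+1,j,C)$ to $(j,k,C,d+1)$ on the columns is effected by the \emph{same} positional permutation, since $i$ and $j$ occupy identical slots and the block $C$ is common; hence the two sign contributions cancel and the determinant is unchanged. I expect this index-ordering check, together with citing the minor formula in the correct pivot-first form, to be the main point requiring care, while the identification of $\Sigma^{(k)}$ as a Schur complement is routine.
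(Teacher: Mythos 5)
Your proof is correct, but it follows a genuinely different route from the paper's. The paper's argument marginalizes: it uses the fact that the inverse Farris transform of $\Sigma^{(k)}_{ijC}$ is the subvariogram $\Gamma_{ijkC}$, applies Fiedler's identity (Lemma~\ref{lem:CMinv}) to that marginal so that the $(i,j)$ entries of the inverses of $\Sigma^{(k)}_{ijC}$ and $\CM(\Gamma)_{ijkC(d+1)}$ agree, and then combines Cramer's rule with the Cayley--Menger determinant formula $\det(\CM(\Gamma))=\det(\Sigma^{(k)})$ to cancel the two denominators. You instead work inside the full Cayley--Menger matrix: the pivot block on $\{k,d+1\}$ is $\left(\begin{smallmatrix}0&1\\-1&0\end{smallmatrix}\right)$ with determinant $1$, its Schur complement on $[d]\setminus\{k\}$ is exactly $\Sigma^{(k)}$ (your entry computation $\tfrac12(\Gamma_{ik}+\Gamma_{jk}-\Gamma_{ij})$ checks out), and the quotient formula for minors of a Schur complement then gives the identity in one stroke; your observation that the row and column reorderings from pivot-first order to the order $ikC(d+1)$, $jkC(d+1)$ are the \emph{same} positional permutation, so their signs cancel, correctly settles the bookkeeping. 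Your route buys several things: it is self-contained linear algebra, needs no probabilistic marginalization and no Cramer's rule, and involves no division by minors other than $\det(M_{PP})=1$, so it establishes the statement as a polynomial identity valid for any $\Gamma\in\mathbb{S}^d_0$, whereas the paper's argument as written divides by $\det(\Sigma^{(k)}_{ijC})$ and thus implicitly relies on nondegeneracy (automatic for $\Gamma\in\mathcal{D}^d$, but an extra hypothesis nonetheless); it also recovers the known identity $\det(\CM(\Gamma))=\det(\Sigma^{(k)})$ as the special case $I=J=[d]\setminus\{k\}$, which you use as a sanity check. What the paper's approach buys in exchange is economy: it reuses machinery already established in the text (Fiedler's identity and the marginal stability of H\"usler--Reiss parameter matrices), so its proof is shorter given that context.
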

\begin{proof}
The inverse Farris transform of $\Sigma^{(k)}_{ijC}$ is $\Gamma_{ijkC}$.
Note that $\Theta^{(k)}_{ij}=\Theta_{ij}$ for $i,j\neq k$ by definition.
Therefore the $(i,j)$-th entry of the inverse of $\Sigma^{(k)}_{ijC}$ equals the $(i,j)$-th entry of the inverse of $\CM(\Gamma)_{ijkC(d+1)}$ by Lemma~\ref{lem:CMinv}.
With Cramer's rule, we obtain that
\begin{align*}
    \frac{(-1)^{i+j}\det(\Sigma^{(k)}_{iC,jC})}{\det(\Sigma^{(k)}_{ijC}))}&=\frac{(-1)^{i+j}\det(\CM(\Gamma)_{ikC,jkC})}{\det(\CM(\Gamma)_{ijkC}))}.
\end{align*}
By the Cayley--Menger determinant formula, the proposition follows.
\end{proof}
The previous result yields a general parametric characterization of extremal conditional independence for H\"usler--Reiss distributions:
\begin{prop}\label{prop:CIviaGamma}
For a H\"usler--Reiss random vector $\Y$ with parameter matrix $\Gamma$ the following equivalence holds:
\[Y_i\perp_e Y_j\vert \Y_C \Longleftrightarrow \det(\CM(\Gamma)_{iC(d+1),jC(d+1)})=0. \]
\end{prop}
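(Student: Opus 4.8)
The plan is to reduce the extremal conditional independence statement to a Gaussian one via the latent variable representation, and then to translate the latter into the vanishing of a Cayley--Menger subdeterminant using the preceding lemma. Throughout I assume $C\neq\emptyset$ (the empty conditioning set being the marginal regime, treated separately), and I use that $i,j,C$ are pairwise disjoint with $i\neq j$.

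First I would fix an element $k\in C$. Since $k\in C$, the latent variable characterization recalled in the preliminaries gives
\[Y_i\perp_e Y_j\mid\Y_C\iff W^k_i\indep W^k_j\mid \W^k_C.\]
The purpose of choosing $k$ inside $C$ is that the degenerate coordinate satisfies $W^k_k=0$ almost surely, so conditioning on $W^k_k$ is vacuous; hence the right-hand side is equivalent to $W^k_i\indep W^k_j\mid \W^k_{C\setminus k}$, a genuine conditional independence statement for the nondegenerate Gaussian vector $\W^k_{\setminus k}$ with positive definite covariance $\Sigma^{(k)}$ on $[d]\setminus k$, whose conditioning set $C\setminus k$ no longer contains $k$. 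Because $i,j\notin C$ and $i,j\neq k$, the indices $i,j$ also lie outside $C\setminus k$, so the standard Gaussian criterion applies.

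Next I would invoke that criterion: for the nondegenerate Gaussian $\W^k_{\setminus k}$, the statement $W^k_i\indep W^k_j\mid \W^k_{C\setminus k}$ holds if and only if the $(i,j)$ entry of the inverse of the submatrix $\Sigma^{(k)}_{\{i,j\}\cup(C\setminus k)}$ vanishes. By Cramer's rule this entry equals, up to a sign and the nonzero factor $\det\Sigma^{(k)}_{\{i,j\}\cup(C\setminus k)}$, the minor $\det\bigl(\Sigma^{(k)}_{i(C\setminus k),\,j(C\setminus k)}\bigr)$, so the criterion becomes the vanishing of this subdeterminant.

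Finally I would apply the preceding lemma, which is applicable precisely because $k\notin C\setminus k$, to rewrite
\[\det\bigl(\Sigma^{(k)}_{i(C\setminus k),\,j(C\setminus k)}\bigr)=\det\bigl(\CM(\Gamma)_{ik(C\setminus k)(d+1),\,jk(C\setminus k)(d+1)}\bigr).\]
Since $k\in C$ and $i\notin C$, the index set $\{i,k\}\cup(C\setminus k)$ equals $\{i\}\cup C$, and likewise for $j$, so the right-hand side is exactly $\det\bigl(\CM(\Gamma)_{iC(d+1),\,jC(d+1)}\bigr)$. Chaining the equivalences yields the claim. I expect the only delicate point to be the index bookkeeping in this last step: the latent representation forces $k\in C$, whereas the lemma requires $k\notin C$, and reconciling the two hinges on absorbing the degenerate coordinate $W^k_k=0$ and on the identity $\{i,k\}\cup(C\setminus k)=\{i\}\cup C$. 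Checking that the disjointness and distinctness hypotheses of the lemma hold under the choice $k\in C$ is the main thing to verify carefully.
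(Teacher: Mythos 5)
Your proof is correct and takes exactly the route the paper intends: the paper states Proposition~\ref{prop:CIviaGamma} as an immediate consequence of the preceding lemma, and your chain --- the latent-variable reduction with $k\in C$ (using $W^k_k=0$ to drop $k$ from the conditioning set), the Gaussian almost-principal-minor criterion via Cramer's rule, and then the lemma applied with conditioning set $C\setminus k$, with the bookkeeping $\{i,k\}\cup(C\setminus k)=\{i\}\cup C$ --- is precisely that implicit derivation. Your restriction to non-empty $C$ also matches the scope the paper announces for this proposition in its introduction.
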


We further find a characterization of $\Gamma$ in the parameters of $Q$ that will be particularly useful for tree models. For a given graph $G$ and distinct vertices $i$ and $j$ in $G$, let $\mathcal{F}_{ij}$ denote the set of all spanning forests $F$ of $G$ such that (1) $F$ has exactly two trees and (2) $i$ and $j$ are in different trees of $F$. Let $\mathcal{T}$ denote the set of all spanning trees of $T$.

\begin{lemma}\label{lem:GammainQ}
Let $\Gamma$ be a Euclidean distance matrix and $Q$ the weighted adjacency matrix of the corresponding signed Laplacian $\Theta=D-Q$.
\begin{enumerate}
    \item The $ij$ entry of $\Gamma$ is
\[\Gamma_{ij}=\frac{\sum_{F\in\mathcal{F}_{ij}}\prod_{st\in F}Q_{st}}{\sum_{T\in\mathcal{T}}\prod_{st\in T}Q_{st}}. \]

\item When $Q$ is the weighted adjacency matrix of a tree $T$, it follows $\Gamma_{ij}=\frac{1}{Q_{ij}}$ for all $ij\in T$.
\end{enumerate}
\end{lemma}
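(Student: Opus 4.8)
The plan is to prove part~(1) through the matrix--tree theorem and then read off part~(2) as an immediate combinatorial specialization to trees. The first step is to recognize the entries of $\Gamma$ as effective resistances of the weighted graph whose conductances are the $Q_{st}$. Since $\Sigma=\Theta^+$ and the variogram--covariance relation of Section~\ref{sec:expfam} gives $\Gamma_{ij}=\Sigma_{ii}+\Sigma_{jj}-2\Sigma_{ij}$, we have $\Gamma_{ij}=(e_i-e_j)^T\Theta^+(e_i-e_j)$ for the standard basis vectors $e_i,e_j$, which is exactly the effective resistance between $i$ and $j$ (see \cite{devriendt2020effective}). This quadratic form is insensitive to the kernel of $\Theta$: replacing $\Theta^+$ by $(\Theta+\tfrac1d\mathbf{1}\mathbf{1}^T)^{-1}$ leaves it unchanged because $(e_i-e_j)^T\mathbf{1}=0$, which conveniently converts the singular $\Theta$ into an invertible matrix for the computation below.

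The crux is then to establish the determinant identity
\[
\Gamma_{ij}=\frac{\det\Theta_{\setminus ij,\setminus ij}}{\det\Theta_{\setminus i,\setminus i}}.
\]
The denominator is a first principal minor of $\Theta$, which by Kirchhoff's matrix--tree theorem equals the weighted spanning-tree sum $\sum_{T\in\mathcal{T}}\prod_{st\in T}Q_{st}$ appearing in the log-partition function of Section~\ref{sec:expfam}, and in particular is independent of $i$. The identity itself I would prove by a Schur-complement or Cramer's-rule computation on the invertible matrix $\Theta+\tfrac1d\mathbf{1}\mathbf{1}^T$, extracting $\Sigma_{ii}+\Sigma_{jj}-2\Sigma_{ij}$ as a ratio of its cofactors and simplifying; alternatively one cites the classical effective-resistance determinant formula. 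I would then invoke the all-minors matrix--tree theorem to identify the numerator $\det\Theta_{\setminus ij,\setminus ij}$ with $\sum_{F\in\mathcal{F}_{ij}}\prod_{st\in F}Q_{st}$, since deleting rows and columns $i$ and $j$ counts exactly the spanning $2$-forests whose two trees separate $i$ from $j$. Because the matrix--tree theorem is a polynomial identity in the edge weights, it remains valid for the signed Laplacian even when some $Q_{st}$ are negative, so no positivity of $Q$ is needed; positive semidefiniteness of $\Theta$ on $\mathcal{H}^{d-1}$ is used only to guarantee that the denominator is nonzero so the ratio is well defined.

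Part~(2) then drops out of part~(1). When $Q$ is supported on a tree $T$, the only spanning tree is $T$ itself, so the denominator is $\prod_{st\in T}Q_{st}$. For an edge $ij\in T$, any spanning $2$-forest of a tree is obtained by deleting a single edge, and only deleting $ij$ places the adjacent vertices $i$ and $j$ in different components; hence $\mathcal{F}_{ij}=\{T\setminus ij\}$ and the numerator equals $\prod_{st\in T}Q_{st}/Q_{ij}$. The ratio collapses to $\Gamma_{ij}=1/Q_{ij}$. I expect the effective-resistance determinant identity in the second paragraph to be the only genuine obstacle; the tree specialization and the matrix--tree bookkeeping are routine once that identity is in hand.
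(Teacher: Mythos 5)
Your proof is correct, and its combinatorial core is the same as the paper's: both reduce part~(1) to the determinant ratio $\Gamma_{ij}=\det(\Theta_{\setminus ij,\setminus ij})/\det(\Theta_{\setminus j,\setminus j})$ (your denominator $\det(\Theta_{\setminus i,\setminus i})$ is the same quantity, since all principal cofactors of a signed Laplacian agree), then apply Chaiken's all-minors matrix-tree theorem to the numerator and the ordinary matrix-tree theorem to the denominator; your part~(2) is verbatim the paper's. Where you genuinely diverge is in how the determinant ratio is obtained. You start from $\Gamma_{ij}=(e_i-e_j)^T\Theta^+(e_i-e_j)$ and lean on the classical effective-resistance determinant formula, which you leave as a citation or a Schur-complement/Cramer computation on $\Theta+\tfrac1d\mathbf{1}\mathbf{1}^T$. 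The paper avoids the pseudo-inverse entirely: by the Farris transform relation \eqref{eq:sigma2gamma} one has $\Gamma_{ij}=\Sigma^{(j)}_{ii}$, and since $\Sigma^{(j)}=(\Theta_{\setminus j,\setminus j})^{-1}$, Cramer's rule applied to this honest (nonsingular) inverse gives the ratio in one line. If you want to close your one acknowledged gap without a citation, the cleanest argument is exactly this grounding trick: solve $\Theta x=e_i-e_j$ normalized so that $x_j=0$, whence $x_{\setminus j}=(\Theta_{\setminus j,\setminus j})^{-1}(e_i)_{\setminus j}$ and $\Gamma_{ij}=x_i=[(\Theta_{\setminus j,\setminus j})^{-1}]_{ii}$; by contrast, your planned cofactor computation on $\Theta+\tfrac1d\mathbf{1}\mathbf{1}^T$ is messier than it appears, because the rank-one shift perturbs the individual cofactors $\det(M_{\setminus i,\setminus i})$, $\det(M_{\setminus i,\setminus j})$ and the simplification back to minors of $\Theta$ is nontrivial bookkeeping. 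What your route buys in exchange: it exhibits the lemma as the classical identity ``effective resistance equals two-forests over spanning trees,'' and it makes explicit two points the paper leaves implicit --- that the matrix-tree theorems are polynomial identities valid for negative edge weights, and that positive semidefiniteness of $\Theta$ on $\mathcal{H}^{d-1}$ is needed only so the spanning-tree polynomial in the denominator is nonzero.
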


\begin{proof}
    (1) Let $i$ and $j$ be distinct vertices of $G$.
    We have that $\Gamma_{ij} = \Sigma_{ii}^{(j)}$. So we wish to compute the principle minor of $\Theta$ obtained by removing the $i$th and $j$th rows and columns. By the All-Minors Matrix Tree Theorem \citep{chaiken1982}, we have that
    \[
    \det(\Theta_{/ij,/ij}) = \sum_{F \in \mathcal{F}_{ij}} \prod_{st \in F} Q_{st}.
    \]
    We note that while the full version of the All-Minors Matrix Tree Theorem involves a specification of signs for each forest, the minor that we wish to compute is principle, and hence each of these signs are 1. By applying the standard Matrix Tree Theorem for weighted graphs, we see that
    \[
    \det(\Theta_{/j,/j}) = \sum_{T \in \mathcal{T}} \prod_{st \in T} Q_{st},
    \]
    and the desired formula for $\Gamma_{ij}$ follows immediately.
    
    (2) Let $T$ be a tree and let $i$ and $j$ be adjacent vertices in $T$. Then $T$ has exactly one spanning tree, namely itself, and $ij$ is an edge in it. Moreover $\mathcal{F}_{ij}$ contains exactly one forest; this forest is exactly $T$ with the edge $ij$ removed, and in particular, it contains all other edges of $T$. So the desired expression for $\Gamma_{ij}$ follows directly from part (1).
\end{proof}

\section{Colored extremal graphical models}
In this section, we will introduce colored extremal graphical models.
We first collect some necessary notation.
Let $ G $ be a simple undirected graph with vertex set $ V=[d] $ and edge set $ E\subset V\times V $.
Define a function $ \lambda: E\rightarrow [r] $ that maps every edge to a natural number $1,\ldots,r$ where $r\le |E|$.
We call $ \mathcal{G} $ a colored graph based on $ G $ with $ r $ edge color classes and 
graph coloring function $ \lambda $, such that $ \lambda(e)=\lambda(e') $ for edges $ e,e'\in E $ with the same color. 
Let $E_i$ denote the collection of nodes in the $i$-th color class, such that $E_i\cap E_j=\emptyset $ for all $i\neq j$ and $E_1\cup\ldots\cup E_r=E$.

\subsection{Colored extremal tree models}\label{sec:def_col_tree}

The particularly nice structure of extremal tree models allows for nonparametric colored tree models.
Assume a tree $T=(V,E)$ with coloring function $\lambda$.
Recall from Section~\ref{sec:ext_tree} that a multivariate Pareto vector $\Y$ that is Markov to $T$ can be characterized by a vector of independent univariate random variables $(U_e)_{e\in E^k}$, where $T^k=(V,E^k)$ is a directed tree obtained from $T$ by directing all edges away from $k$.

We define a nonparametric colored extremal tree model by imposing the same distribution for all univariate random variables $U_e$ within the same edge color class. Therefore we call $\Y$ a nonparametric colored extremal tree model with respect to $(T,\lambda)$ when
\begin{enumerate}
    \item $\Y$ is extremal Markov to $T$,
    \item $U_e\stackrel{d}{=}U_{e'}$, whenever $ \lambda(e)=\lambda(e') $ for $ e,e'\in E^k $.
\end{enumerate}
As a consequence, we find that the entries in the extremal variogram~\eqref{eq:extremal_variogram} for edges within the same edge color class are identical, i.e.
\begin{align*}
    \Gamma^{(k)}_{st}=\text{Var}(U_{st})=\text{Var}(U_{uv})=\Gamma^{(k)}_{uv}\qquad \text{ when } \lambda(st)=\lambda(uv) \text{ for } st,uv\in E^k.
\end{align*}
This means that the extremal variogram allows to identify edge colorings.
\begin{ex}
    Let $\Y$ be a colored extremal tree model with respect to the colored claw graph in Figure~\ref{fig:claw} for $k=4$, see also Example~\ref{ex:claw}.
    This means that $U_{41}\stackrel{d}{=}U_{43}$ and $\Gamma^{(4)}_{14}=\Gamma^{(4)}_{34}$.
\end{ex}
Bivariate Pareto distributions can impose an extremal tree model, see \cite{EV2020} for details. We employ this fact to construct a colored extremal tree model in the following example.
\begin{ex}
Assume a multivariate Pareto distribution $\Y$ that is Markov to some tree $T=(V,E)$ where every bivariate marginal $\Y_e,\;e\in E$ is distributed according to an extremal logistic distribution.
This implies for $e\in E$ that $U_e$ can be stochastically represented by a difference of a Fréchet-distributed random variable and a Gamma-distributed random variable depending on a parameter $\theta_e\in(0,1)$, see \citet[Example~2]{EV2020}.
Then it holds that $\Gamma^{(k)}_{uv}=\theta_{uv}^2(\psi^{(1)}(1-\theta_{uv})+\pi^2/6) $ for $uv\in E$, where $\psi^{(1)}$ is the trigamma function.
We observe that symmetries $U_e\stackrel{d}{=}U_{e'}$ can therefore be parametrically encoded via $\theta_e=\theta_{e'}$.
\end{ex}

Colored extremal tree models provide a nonparametric complexity reduction in comparison to uncolored extremal tree models, as
that they allow for a nonparametric extremal graphical model that can be characterized by a small number of univariate random variables.
In the following sections, we will discuss different types of parametric colored graphical models for the family of H\"usler--Reiss distributions. We will see in Section~\ref{sec:col_HRtree} that for H\"usler--Reiss tree models all models coincide with the colored tree model as defined above.

\subsection{H\"usler--Reiss RCON model}\label{sec:def_RCON}

Graph colorings allow for statistical modeling of parameter symmetries, and hence parameter reduction, of graphical models.
Herein edges within the same edge color class are considered to impose equality among their respective parameters.
For colored Gaussian graphical models, an RCON model defines parameter equalities in their concentration matrices \citep{LauritzenColoured}; the name ``RCON" models reflects that such a model is constructed by imposing restrictions on the concentration matrix.
In this section we discuss a similar approach for H\"usler--Reiss graphical models.

As discussed in Section~\ref{sec:prelim_HR}, a H\"usler--Reiss distribution can be parameterized by either a variogram matrix $\Gamma$ or by a weighted adjacency matrix $Q$, where $Q$ encodes the extremal graphical structure.
We define an $ \text{RCON} $ model for H\"usler--Reiss graphical models with weighted adjacency matrix $ Q $ such that
	\begin{enumerate}
		\item $Q_{uv}=0$ for $ ij\not\in E $,
		\item $ Q_{st}=Q_{uv} $ for $ \lambda(st)=\lambda(uv) $ for $ st,uv\in E $.
	\end{enumerate}
The $ \text{RCON} $ model is a linear submodel of the regular H\"usler--Reiss graphical model. We will develop statistical methodology for H\"usler--Reiss RCON models in Section~\ref{sec:learn_RCON}.
\begin{ex}
\begin{figure}
    \centering
   	\begin{tikzpicture}[line width=.6mm]
			\node[minimum size=8mm,shape=circle,draw=black] (1) at (-2,1) {1};
			\node[minimum size=8mm,shape=circle,draw=black] (2) at (-2,-1) {2};
			\node[minimum size=8mm,shape=circle,draw=black] (3) at (0,0) {3};
			\node[minimum size=8mm,shape=circle,draw=black] (4) at (2,1) {4};
   \node[minimum size=8mm,shape=circle,draw=black] (5) at (2,-1) {5};
			\path[color=red] (1) edge (2);
			\path[color=blue] (1) edge (3);
			\path[color=blue] (2) edge (3);
			\path[color=green] (3) edge (4);			
			\path[color=green] (3) edge (5);
   			\path[color = red] (4) edge (5);
		\end{tikzpicture}
    \caption{Colored graph with five nodes and six edges}
    \label{fig:example}
\end{figure}
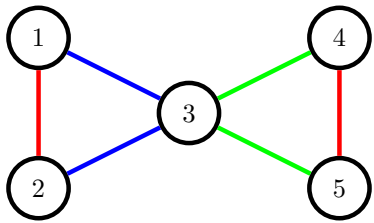
A H\"usler--Reiss RCON model with respect to the colored graph in Figure~\ref{fig:example} has a weighted adjacency matrix
\begin{align*}
    Q=\begin{pmatrix}
        0&\textcolor{red}{\omega_1}&\textcolor{blue}{\omega_2}&0&0\\
        \textcolor{red}{\omega_1}&0&\textcolor{blue}{\omega_2}&0&0\\
        \textcolor{blue}{\omega_2}&\textcolor{blue}{\omega_2}&0&\textcolor{green}{\omega_3}&\textcolor{green}{\omega_3}\\
        0&0&\textcolor{green}{\omega_3}&0&\textcolor{red}{\omega_1}\\
        0&0&\textcolor{green}{\omega_3}&\textcolor{red}{\omega_1}&0\\
    \end{pmatrix}.
\end{align*}
It is parameterized by an ordered triple $\omega=(\omega_1,\omega_2,\omega_3)$.
\end{ex}

\subsection{Variogram symmetries in arbitrary H\"usler--Reiss graphical models}\label{sec:def_RVAR}
In the discussion of \cite{EH2020} by Huser and Cisneros, they discuss parameter symmetries for H\"usler--Reiss tree models, where all entries of $\Gamma$ that correspond to edges of the tree are identical.
This approach is proposed by Huser and Cisneros as a new method to impose spatial structure for data on regular lattices.
We generalize this approach to allow for parameter symmetries in $\Gamma$ for arbitrary colored graphs.
Therefore assume a graph $G=(V,E)$ with edge coloring function $\lambda$.
We define variogram symmetries via $\lambda$ for a H\"usler--Reiss graphical model, such that
\begin{enumerate}
    \item $\Gamma_{st}=\Gamma_{uv}$  when $\lambda(st)=\lambda(uv)$ for $st,uv\in E$,
    \item $Q_{uv}=0$ when $uv\not\in E$.
\end{enumerate}
We refer to this model as a H\"usler--Reiss RVAR model, as it imposes parameter symmetries in the variogram matrix.
While this model leads to highly nonlinear constraints when parameterized either in $\Gamma$ or in $Q$, a mixed parameterization $\left((\Gamma_{uv})_{uv\in E},(Q_{uv})_{uv\not\in E}\right)$ allows for a linear description.
Statistical methodology for this model based on mixed parameterizations will be discussed in Section~\ref{sec:learn_RVAR}.

\begin{ex}

A H\"usler--Reiss RVAR model with respect to the colored graph in Figure~\ref{fig:example} has a mixed parameterization that is characterized by a variogram matrix and precision matrix
\begin{align*}
    \Gamma&=\begin{pmatrix}
        0&\textcolor{red}{\nu_1}&\textcolor{blue}{\nu_2}&*&*\\
        \textcolor{red}{\nu_1}&0&\textcolor{blue}{\nu_2}&*&*\\
        \textcolor{blue}{\nu_2}&\textcolor{blue}{\nu_2}&0&\textcolor{green}{\nu_3}&\textcolor{green}{\nu_3}\\
        *&*&\textcolor{green}{\nu_3}&0&\textcolor{red}{\nu_1}\\
        *&*&\textcolor{green}{\nu_3}&\textcolor{red}{\nu_1}&0\\
    \end{pmatrix},\qquad Q=\begin{pmatrix}
        0&*&*&0&0\\
        *&0&*&0&0\\
        *&*&0&*&*\\
        0&0&*&0&*\\
        0&0&*&*&0\\
    \end{pmatrix}.
\end{align*}
Therefore the given RVAR model is parametrized by an ordered triple $\nu=(\nu_1,\nu_2,\nu_3)$.
The missing entries in $\Gamma$ and $Q$ can be determined uniquely by solving the matrix completion problem \citep[Proposition~4.3]{HES2022}.
Note that because $G$ is a decomposable graph in this example, the matrix completion problem is solved by rational functions in $\nu$.
\end{ex}

\subsection{H\"usler--Reiss colored tree model}\label{sec:col_HRtree}
Let $\Y$ be a H\"usler--Reiss tree model with parameter matrix $\Gamma$ with respect to some tree $T=(V,E)$.
The $k$th extremal function $\W^k$ allows a construction \eqref{eq:tree_construction} from univariate Gaussian variables $U_{ij}\sim N(-\Gamma_{ij}/2,\Gamma_{ij}) $ for all $ij\in E^k$.
We define a H\"usler--Reiss colored tree model for some coloring function $\lambda$ by imposing the relation $U_e\stackrel{d}{=}U_{e'}$, whenever $ \lambda(e)=\lambda(e') $ for $ e,e'\in E^k $, compare Section~\ref{sec:def_col_tree}.
As the relations $U_e\stackrel{d}{=}U_{e'}$ are equivalent to imposing variogram symmetries, we observe an equivalence to H\"usler--Reiss RVAR tree models.
By Lemma~\ref{lem:GammainQ} we find that the definitions for H\"usler--Reiss RCON and RVAR models on trees coincide, which yields the following proposition.
\begin{prop}\label{prop:colHRtree} Let  $T = (V,E)$ be a tree. 
    The colored H\"usler--Reiss tree models, H\"usler--Reiss RCON models and  H\"usler--Reiss RVAR models that are Markov to $T$ coincide.
\end{prop}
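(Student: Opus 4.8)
The plan is to show that all three colored-model definitions cut out the same affine family of H\"usler--Reiss distributions by reducing each one to an identical constraint on the edge parameters of $T$. First I would recall that a H\"usler--Reiss model Markov to $T$ is determined by its edge variograms $(\Gamma_{ij})_{ij \in E}$: the tree-metric property fixes every off-edge entry as a sum along the connecting path, and equivalently, by Lemma~\ref{lem:GammainQ}(2), the model is parameterized by the positive edge weights $(Q_{ij})_{ij \in E}$. Since the coloring function $\lambda$ is defined only on $E$, it suffices to compare how each definition constrains these edge parameters.

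Next I would identify the colored H\"usler--Reiss tree model with the RVAR model. In the tree construction each edge variable satisfies $U_{ij} \sim N(-\Gamma_{ij}/2, \Gamma_{ij})$, so its law is determined by the single scalar $\Gamma_{ij}$; hence $U_e \stackrel{d}{=} U_{e'}$ holds exactly when the variances coincide, i.e. $\Gamma_e = \Gamma_{e'}$. This is precisely the RVAR symmetry $\Gamma_{st} = \Gamma_{uv}$ for same-colored edges, as already observed in the discussion preceding the statement.

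The core of the argument is the equivalence of the RVAR and RCON constraints, which I would obtain directly from the edgewise reciprocal relation of Lemma~\ref{lem:GammainQ}(2): on a tree, $\Gamma_{ij} = 1/Q_{ij}$ for every $ij \in E$. Because $x \mapsto 1/x$ is a bijection of the positive reals onto itself, equality of variogram entries along tree edges is equivalent to equality of weighted-adjacency entries,
\[
\Gamma_{st} = \Gamma_{uv} \iff \frac{1}{Q_{st}} = \frac{1}{Q_{uv}} \iff Q_{st} = Q_{uv},
\]
so the RVAR symmetries and the RCON symmetries determine the very same subfamily, completing the chain of coincidences.

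I do not expect a substantial obstacle, as the proposition is fundamentally a consequence of this reciprocal identity. The one point demanding care is that $\Gamma_{ij} = 1/Q_{ij}$ holds only for tree edges and not for arbitrary pairs $i,j$; I would therefore emphasize that the coloring is supported on $E$, so that this partial correspondence between $\Gamma$ and $Q$ is exactly what is needed to match the two sets of constraints without having to track the induced values on non-edges.
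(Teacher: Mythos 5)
Your proposal is correct and follows essentially the same route as the paper's proof: the identification of colored H\"usler--Reiss tree models with RVAR models directly from the definitions (the law of $U_{ij}$ being determined by the single parameter $\Gamma_{ij}$), and the RVAR--RCON equivalence via the edgewise reciprocal relation $Q_{ij}=1/\Gamma_{ij}$ of Lemma~\ref{lem:GammainQ}(2). The only stylistic difference is that you make the two-way implication explicit through the bijectivity of $x\mapsto 1/x$ on the positive reals, where the paper instead invokes nonvanishing of the entries of $\Gamma$ as a Euclidean distance matrix of a non-degenerate simplex; both observations serve the same purpose.
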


\begin{proof}
    The equivalence between colored H\"usler--Reiss tree models on $T$ and H\"usler--Reiss RVAR models on $T$ follows directly from the definitions.
    Let $\Y$ be a H\"usler--Reiss RCON vector with respect to a tree $T=(V,E)$.
    Hence, $\Y$ allows a construction by univariate Gaussians $(U_{ij})_{ij\in E^k}$ with variances $\Gamma_{ij}$, where the remaining entries of $\Gamma$ can be recovered via the tree metric, compare Section~\ref{sec:ext_tree}. Since $\Gamma$ is the Euclidean distance matrix of a non-degenerate simplex, its entries are nonzero, compare \cite{devriendt2020effective}.
    According to Lemma~\ref{lem:GammainQ} it then follows that 
    \[Q_{ij}=\frac{1}{\Gamma_{ij}}=\frac{1}{\Gamma_{st}}=Q_{st}\]
    when $\lambda(ij)=\lambda(st)$ for $ij,st\in E$.
    This proves the proposition.
\end{proof}

\section{Learning colored H\"usler--Reiss graphical models}\label{sec:surrogate_lik}

In this section we introduce methodology for surrogate likelihood inference for RCON and RVAR models.
Hereby we study likelihood inference for signed Laplacian-constrained Gaussian graphical models under RCON and RVAR constraints and then discuss their application in multivariate extremes for H\"usler--Reiss distributions.

As illustrated in Section~\ref{sec:prelim_HR}, the extremal functions of a H\"usler--Reiss distribution are multivariate Gaussians where both the mean and the covariance depend on the parameter matrix $\Gamma$.
The parameter dependence in the mean leads to analytical complications when interest is in maximum likelihood estimation for H\"usler--Reiss distributions.
This motivates surrogate likelihood inference where we maximize the log-likelihood of a signed Laplacian-constrained Gaussian graphical model (compare e.g.~\citet{REZ2021,HES2022}) instead of the actual H\"usler--Reiss log-likelihood.
Hereby we ignore the likelihood contribution of the mean, such that the resulting likelihood problem allows for a simple scoring algorithm.
Equivalently, a surrogate Kullback--Leibler divergence for signed Laplacian-constrained Gaussian graphcial models gives rise to a surrogate mixed dual estimator.
This yields a two-step estimation procedure when convex constraints are applied to a mixed parameterization (compare Section~\ref{sec:expfam}), which we will employ for H\"usler--Reiss RVAR models.

\subsection{Signed Laplacian-constrained Gaussian graphical models under RCON constraints}\label{sec:learn_RCON}
As discussed in Section~\ref{sec:expfam}, a signed Laplacian-constrained Gaussian graphical model forms an exponential family with mean parameter $-\Gamma/2$ and natural parameter $Q$.
Given a graph $G=(V,E)$, assume an RCON model with coloring function $\lambda:E\to [r]$ as in Section~\ref{sec:def_RCON}
and define a colored signed Laplacian-constrained Gaussian graphical model through constraints	 
\[Q_{ij}=0 \text{ for }  ij\not\in E , \quad Q_{ij}=Q_{uv}  \text{ for } \lambda(ij)=\lambda(uv)  \text{ for }  ij,uv\in E. \]
The model obtained by imposing linear constraints on the natural parameters of an exponential family is itself an exponential family.
Therefore the colored signed Laplacian-constrained Gaussian graphical model forms an exponential family with natural parameter $\omega=(\omega_{1},\ldots,\omega_r) $, and natural parameter space $\Omega:=\{\omega\in\RR^r:\;Q(\omega)\in\mathcal{Q} \}$, where $Q(\omega)$ is given by
\begin{align}
    Q(\omega)_{uv}=\begin{cases}
        \omega_i & \ uv \in E: \;\lambda(uv)=i,\\
        0 & \ uv \not\in E.
    \end{cases}\label{eq:Qinomega}
\end{align}
The sufficient statistic of a signed Laplacian-constrained Gaussian graphical model with respect to $G$ is $(-\widetilde{\Gamma}_{uv}/2)_{uv \in E}$, see Section~\ref{sec:prelim_HR}. 
The linear constraints of the RCON model as in \eqref{eq:Qinomega} yield a sufficient statistic
$\mathbf{t}(\widetilde{\Gamma})$ where
\begin{align}
     t_i(\widetilde{\Gamma})=-\frac{1}{2}\sum_{uv \in E: \;\lambda(uv)=i} \widetilde{\Gamma}_{uv}.\label{eq:summary_stat}
\end{align}
This yields the log-likelihood
\begin{align*}\ell(\omega;\mathbf{t}(\widetilde{\Gamma}))=\frac{1}{2}\log \sum_{T\in \mathcal{T}}\prod_{ij\in T} Q_{ij}(\omega) +\langle \mathbf{t}(\widetilde{\Gamma}),\omega \rangle.   
\end{align*}
We define a maximum likelihood estimator for the signed Laplacian-constrained Gaussian graphical model under RCON constraints given $\mathbf{t}(\widetilde{\Gamma})$ as
\begin{align}
    \widehat{\omega}&= \argmax_{\omega \in \Omega}\ell(\omega;\mathbf{t}(\widetilde{\Gamma})). \label{eq:mlecol}
\end{align}
The partial derivatives of the log-likelihood with respect to $\omega_i$ are the score functions
\begin{align*}
    S_i(\omega)&=\frac{\partial}{\partial \omega_i} \ell (\omega,\mathbf{t}(\widetilde{\Gamma}))=\frac{1}{2}\sum_{uv \in E: \;\lambda(uv)=i} \left(\Gamma_{uv}(\omega)- \widetilde{\Gamma}_{uv}\right), \quad 1\le i \le r.
\end{align*}
Together with graphical constraints $Q_{ij}=0 $ for all $ij\not\in E$, these characterize the maximum likelihood estimator $\widehat{\omega}$.
When interest is in finding a maximum likelihood estimate $\widehat{\omega}$ given the sufficient statistic $\mathbf{t}(\widetilde{\Gamma})$, classical approaches include scoring algorithms, which make use of Newton's method to find the vanishing point of the score equations.
Such algorithms require the Hessian matrix of the log-likelihood, which for exponential families equals the negative of the information matrix of the given model.
We therefore compute the information matrix of the RCON signed Laplacian Gaussian graphical model in the following lemma.
\begin{lemma}\label{lem:infmat}
The information matrix $I(\omega)\in \RR^{r\times r}$ of a colored signed Laplacian-constrained Gaussian graphical model is
    \begin{align*}
    I(\omega)_{ij}&=\frac{1}{8}\sum_{uv \in E: \;\lambda(uv)=i}\sum_{st \in E: \;\lambda(uv)=j} (\Gamma_{sv}(\omega)-\Gamma_{us}(\omega)-\Gamma_{tv}(\omega)+\Gamma_{ut}(\omega))^2
\end{align*}
\end{lemma}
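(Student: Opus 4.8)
The plan is to exploit the exponential-family structure of the colored model. Since it is an exponential family in the natural parameter $\omega$, the Fisher information is the negative Jacobian of the score in $\omega$, so $I(\omega)_{ij} = -\partial S_i(\omega)/\partial \omega_j$. Using the score formula stated just above the lemma, this reduces to
\[
I(\omega)_{ij} = -\frac{1}{2}\sum_{uv\in E:\,\lambda(uv)=i}\frac{\partial\Gamma_{uv}(\omega)}{\partial\omega_j}.
\]
Because every edge $st$ with $\lambda(st)=j$ carries the single shared parameter $Q_{st}=\omega_j$, the chain rule gives $\partial\Gamma_{uv}/\partial\omega_j = \sum_{st\in E:\,\lambda(st)=j}\partial\Gamma_{uv}/\partial Q_{st}$, so the whole problem collapses to evaluating the single partial derivative $\partial\Gamma_{uv}/\partial Q_{st}$ and expressing it through the entries of $\Gamma$.

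To obtain this derivative, I would first record how $\Theta$ moves. Since $\Theta = D - Q$ with $D_{ii}=\sum_{j\neq i}Q_{ij}$, raising $Q_{st}$ increases $D_{ss}$ and $D_{tt}$ by one and lowers the $(s,t)$ and $(t,s)$ off-diagonal entries by one, so $\partial\Theta/\partial Q_{st} = (\bs e_s - \bs e_t)(\bs e_s - \bs e_t)^T$, where $\bs e_s$ denotes the $s$th standard basis vector. The key step, and the main obstacle, is differentiating the pseudoinverse $\Sigma=\Theta^+$: the ordinary inverse rule does not apply directly because $\Theta$ is singular with kernel $\mathrm{span}(\bs 1)$. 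The resolution is that the rank-one bump $(\bs e_s - \bs e_t)(\bs e_s - \bs e_t)^T$ annihilates $\bs 1$, so $\Theta$ keeps the fixed kernel $\mathrm{span}(\bs 1)$ to first order. Differentiating the identity $\Sigma\Theta = P = I_d - \tfrac1d\bs 1\bs1^T$ and using $\Sigma P = \Sigma$ then yields the clean formula $\partial\Sigma/\partial Q_{st} = -\Sigma(\bs e_s-\bs e_t)(\bs e_s-\bs e_t)^T\Sigma$. Verifying that the kernel remains fixed, so that this pseudoinverse rule is legitimate, is the only delicate point, and it is precisely where the signed-Laplacian structure enters.

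With this in hand, I would write $\Gamma_{uv} = (\bs e_u - \bs e_v)^T\Sigma(\bs e_u - \bs e_v)$ and differentiate to get
\[
\frac{\partial\Gamma_{uv}}{\partial Q_{st}} = -\big[(\bs e_u-\bs e_v)^T\Sigma(\bs e_s - \bs e_t)\big]^2.
\]
The final step turns this bilinear form into variogram entries: substituting $\Sigma_{ab}=\tfrac12(\Sigma_{aa}+\Sigma_{bb}-\Gamma_{ab})$ into $(\bs e_u-\bs e_v)^T\Sigma(\bs e_s-\bs e_t)=\Sigma_{us}-\Sigma_{ut}-\Sigma_{vs}+\Sigma_{vt}$, all diagonal terms $\Sigma_{uu},\Sigma_{vv},\Sigma_{ss},\Sigma_{tt}$ cancel, leaving $\tfrac12(\Gamma_{ut}+\Gamma_{vs}-\Gamma_{us}-\Gamma_{vt})$. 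Hence $\partial\Gamma_{uv}/\partial Q_{st} = -\tfrac14(\Gamma_{sv}-\Gamma_{us}-\Gamma_{tv}+\Gamma_{ut})^2$, using symmetry of $\Gamma$. Assembling everything, the two minus signs together with the factors $\tfrac12$ and $\tfrac14$ combine into the claimed $\tfrac18$, and the double sum over the color classes indexed by $i$ and $j$ reproduces exactly the stated expression for $I(\omega)_{ij}$.
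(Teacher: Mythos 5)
Your proof is correct, and its overall skeleton is the same as the paper's: Fisher information as the negative Jacobian of the score, the chain rule over color classes reducing everything to the single derivative $\partial\Gamma_{uv}/\partial Q_{st}$, and the conversion of a $\Sigma$-bilinear form into variogram entries, ending in $\partial\Gamma_{uv}/\partial Q_{st}=-\tfrac14(\Gamma_{sv}-\Gamma_{us}-\Gamma_{tv}+\Gamma_{ut})^2$ and the factor $\tfrac18$. Where you genuinely differ is in how that key derivative is computed. The paper works in the $v$-rooted parametrization: it uses $\Gamma_{uv}=\Sigma^{(v)}_{uu}$, where $\Sigma^{(v)}=(\Theta^{(v)})^{-1}$ is an ordinary inverse of the invertible submatrix of $\Theta$ with row and column $v$ deleted, applies the standard derivative-of-the-inverse rule, and consequently needs a case analysis on whether the edge $st$ touches $v$ (the perturbation of $\Theta^{(v)}$ is $(e_s-e_t)(e_s-e_t)^T$ when $s,t\neq v$, but $e_se_s^T$ when $t=v$). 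You instead differentiate the full pseudoinverse $\Sigma=\Theta^{+}$, observing that the perturbation $(e_s-e_t)(e_s-e_t)^T$ annihilates $\mathbf{1}$, so the kernel $\mathrm{span}(\mathbf{1})$ is preserved and $\partial\Sigma/\partial Q_{st}=-\Sigma(e_s-e_t)(e_s-e_t)^T\Sigma$; your justification via differentiating $\Sigma\Theta=P$ together with $\Sigma P=\Sigma$ is sound (equivalently, the correction terms in the general constant-rank pseudoinverse-derivative formula vanish because $(e_s-e_t)^T\mathbf{1}=0$). Your route buys full symmetry in $u,v,s,t$ and eliminates the case distinction; its cost is the kernel-preservation argument, which the paper sidesteps by reducing to an invertible minor. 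One last remark: the lemma as stated has an index typo (the second sum should run over $st\in E$ with $\lambda(st)=j$, not $\lambda(uv)=j$), and both your argument and the paper's implicitly use the corrected version.
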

A proof of Lemma~\ref{lem:infmat} is available in Section~\ref{prf:infmat}.

\subsection{Signed Laplacian-constrained Gaussian graphical models under variogram symmetries}\label{sec:learn_RVAR}

Assume a signed Laplacian-constrained Gaussian graphical model that satisfies RVAR constraints with respect to a graph $G=(V,E)$. Therefore we assume parameter constraints
\begin{align}
    Q_{uv}=0 \text{ when }uv\not\in E,\qquad \Gamma_{st}=\Gamma_{uv}\text{ when }\lambda(st)=\lambda(uv)\text{ for }st,uv\in E,
\end{align}
in terms of the mixed parameterization $((\Gamma_{uv})_{uv\in E},(Q_{uv})_{uv\not\in E},)$. 
The affine parametric constraints on the mixed parameterization induce a mixed linear exponential family with parameter vector $\nu=(\nu_1,\ldots,\nu_r)$, where $\Gamma_{uv}(\nu)=\nu_i$ for all $uv\in E$ with $\lambda(uv)=i$.
Let $N:=\{\nu \in\RR^r: \Gamma(\nu)\in M \}$ define the mean parameter space and for a given sample adjacency matrix $\widetilde{Q}$, let $\mathbf{s}(\widetilde{Q})$ with $s_i(\widetilde{Q})=\sum_{uv \in E: \;\lambda(uv)=i}\widetilde{Q}_{uv} $ define the corresponding summary statistic.
Then, the dual log-likelihood is
\[\widecheck{\ell}(\nu,\mathbf{s}(\widetilde{Q})):=\widecheck{\ell}(\Gamma(\nu),\widetilde{Q}). \]
A mixed dual estimator with respect to the constraints above is given by a two-step estimation procedure where
	\begin{enumerate}
		\item the first step~(S1) is
		\begin{align}
		    \widehat{Q}=\argmax_{Q\in \mathcal{Q}: Q_{ij}=0 \;\forall ij\not\in E} \ell(Q;\widetilde{\Gamma}),  \label{eq:step1}
		\end{align}
		\item and the second step~(S2) is
        \begin{align}
      		\widecheck{\nu}=\argmax_{\nu \in N}\widecheck{\ell}(\nu; \mathbf{s}(\widehat{Q})).\label{eq:step2}
        \end{align}
 \end{enumerate}
The first step~(S1) is a maximum likelihood estimation problem for the graphical model given by $G$, such that $\widehat{Q}$ is the solution of the matrix completion problem
\begin{align}
    \widehat{\Gamma}_{uv}=\widetilde{\Gamma}_{uv},\quad  uv\in E, \qquad Q_{uv}=0\quad  uv\not\in E,
\end{align}
compare also \citet{HES2022}.
The second step~(S2) constitutes a maximum dual likelihood estimator for the colored model given a summary statistic $\mathbf{s}(\widehat{Q})$, see e.g.~\citet{LZ2022} and \citet{Brown1986} for details on maximum dual likelihood estimation.

For an algorithmic solution of the maximum dual likelihood problem in step~(S2), we can employ Newton's method to define a dual scoring algorithm.
This requires the dual score equations for this model, which we obtain as
\begin{align}
    \widecheck{S}_i(\nu)&=\frac{\partial}{\partial \nu_i} \widecheck{\ell}(\nu; \mathbf{s}(\widetilde{Q}))=\frac{1}{2}\sum_{uv \in E: \;\lambda(uv)=i} \left(Q_{uv}(\nu)-\widetilde{Q}_{uv}\right), \quad 1\le i \le r.
\end{align}
For a scoring algorithm, we further require the Jacobian matrix of the score function $ \widecheck{I}(\nu):=-\nabla_\nu \widecheck{S}(\nu)^T$, i.e.~the negative Hessian matrix of the dual log-likelihood.

\begin{lemma}\label{lem:infmat_recipr}
The entries of the negative Hessian matrix of the dual log-likelihood are determined by
    \begin{align*}
        \widecheck{I}(\nu)_{ij}=-\frac{1}{4}\sum_{uv \in E: \;\lambda(uv)=i}\sum_{st \in E: \;\lambda(st)=j}(\Theta_{ut}(\nu)\Theta_{vs}(\nu)+\Theta_{us}(\nu)\Theta_{vt}(\nu)).
    \end{align*}
\end{lemma}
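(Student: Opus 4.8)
The plan is to collapse the whole statement to a single matrix-calculus identity, the derivative of one off-diagonal signed-Laplacian entry $\Theta_{uv}$ with respect to one variogram entry $\Gamma_{st}$. First I would observe that the dual score $\widecheck{S}_i(\nu)=\tfrac12\sum_{uv\in E:\,\lambda(uv)=i}\bigl(Q_{uv}(\nu)-\widetilde{Q}_{uv}\bigr)$ is affine in the entries of $Q$, with $\widetilde{Q}$ a fixed summary statistic, so that $\widecheck{I}(\nu)_{ij}=-\partial_{\nu_j}\widecheck{S}_i(\nu)=-\tfrac12\sum_{uv\in E:\,\lambda(uv)=i}\partial_{\nu_j}Q_{uv}(\nu)$. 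Since the RVAR constraint sets $\Gamma_{st}(\nu)=\nu_{\lambda(st)}$ on every edge, the chain rule rewrites the derivative in the colour parameter as a sum over the edges of colour $j$, namely $\partial_{\nu_j}Q_{uv}=\sum_{st\in E:\,\lambda(st)=j}\partial_{\Gamma_{st}}Q_{uv}$. Everything then reduces to computing $\partial_{\Gamma_{st}}Q_{uv}=-\partial_{\Gamma_{st}}\Theta_{uv}$ for a pair of edges.

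For this derivative I would invoke Lemma~\ref{lem:CMinv}: setting $N:=\CM(\Gamma)^{-1}$, its leading $d\times d$ block is exactly $\Theta$, so $\Theta_{uv}=N_{uv}$ for $u,v\le d$. Differentiating the inverse through $\partial N=-N\,\bigl(\partial\,\CM(\Gamma)\bigr)\,N$ and noting that the only $\Gamma_{st}$-dependence of $\CM(\Gamma)$ lies in its leading block $-\Gamma/2$, the perturbation $\partial_{\Gamma_{st}}\CM(\Gamma)=-\tfrac12\bigl(\mathbf{e}_s\mathbf{e}_t^\top+\mathbf{e}_t\mathbf{e}_s^\top\bigr)$ is rank two and supported on the coordinates $s,t$. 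Reading off the $(u,v)$-entry and using $N_{ab}=\Theta_{ab}$ then gives $\partial_{\Gamma_{st}}\Theta_{uv}=\tfrac12\bigl(\Theta_{us}\Theta_{vt}+\Theta_{ut}\Theta_{vs}\bigr)$, a symmetrised product of two entries of $\Theta$. Substituting this (with the sign from $Q=-\Theta$) into the reduction of the first paragraph and carrying out the double colour-class summation produces the stated double sum with prefactor $\tfrac14$.

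The step I expect to demand the most care is the bordered-inverse bookkeeping rather than the summation. One must treat $\Gamma_{st}$ as a single scalar whose perturbation simultaneously moves the two symmetric matrix entries; this is precisely what yields the symmetrised rank-two perturbation and hence the two-term product $\Theta_{us}\Theta_{vt}+\Theta_{ut}\Theta_{vs}$, and a careless treatment would drop the symmetrisation or a factor of two. Equally delicate is being explicit about what is held fixed when differentiating $Q_{uv}$ along $\nu\mapsto\Gamma(\nu)$, since the score is formulated on the mixed parameterization $\bigl((\Gamma_{uv})_{uv\in E},(Q_{uv})_{uv\notin E}\bigr)$; keeping this constraint set straight is what determines the three minus signs (from $\widecheck{I}=-\partial_\nu\widecheck{S}$, from $Q=-\Theta$ off the diagonal, and from the inverse-derivative identity) and thereby the final sign of $\widecheck{I}(\nu)_{ij}$.
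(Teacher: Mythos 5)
Your proposal is correct and follows essentially the same route as the paper's proof: reduce $\widecheck{I}(\nu)_{ij}$ via the chain rule over color classes to the single derivative $\partial_{\Gamma_{st}}Q_{uv}$, then evaluate that derivative by combining Lemma~\ref{lem:CMinv} with the derivative-of-the-inverse identity, which gives $\partial_{\Gamma_{st}}Q_{uv}=-\tfrac12\bigl(\Theta_{ut}\Theta_{vs}+\Theta_{us}\Theta_{vt}\bigr)$ exactly as in the paper. One remark: the prefactor $+\tfrac14$ you arrive at matches the final line of the paper's own computation and is indeed the correct sign (the dual log-likelihood is concave, so its negative Hessian $\widecheck{I}$ must be positive semidefinite, forcing nonnegative diagonal entries $\tfrac14\sum\sum(\Theta_{uu}\Theta_{vv}+\Theta_{uv}^2)$); the $-\tfrac14$ appearing in the lemma statement as printed is a sign typo in the paper, not an error in your derivation.
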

The proof is available in Section~\ref{prf:infmat_recipr}.

\subsection{Scoring algorithms}
Linear exponential families allow for scoring algorithms to learn a surrogate maximum likelihood estimator \eqref{eq:mlecol}, compare \citet{hojsgaard2012graphical} and \citet[Section D.1.5]{Lauritzen96} for details.
Hereby the canonical parameters are updated such that the $(m+1)$-th iteration $\omega^{m+1}$ equals
$ \omega^{m+1}= \omega^{m}+I(\omega^m)^{-1}S(\omega^m).$
According to \citet{hojsgaard2012graphical} this algorithm can suffer from various instabilities. 
As an alternative, they suggest an alternative iteration based on the dual log-likelihood that is
\begin{align}
    \omega^{m+1}=\omega^m+\left(I(\omega^m) +S(\omega^m)S(\omega^m)^T\right)^{-1}S(\omega^m).\label{eq:scoring_step}
\end{align}
The additional summand $S(\omega^m)S(\omega^m)^T$ improves the stability of the algorithm.
For linear exponential families with only one parameter, the modified algorithm was shown to be globally convergent \citep{JJL1991}.

\subsubsection{Scoring algorithm under RCON constraints}

The scoring step \eqref{eq:scoring_step} gives rise to Algorithm~\ref{alg:ScoringAlg} that computes a surrogate maximum likelihood estimate for colored H\"usler--Reiss graphical models given an empirical variogram matrix $\overline{\Gamma}$ and a start value $\omega^0$, using the formulas for the scores $S(\omega)$ and the information matrix $I(\omega)$ that we computed above.
As a straightforward starting value we average the empirical values for each edge color class, i.e.
\begin{align*}
    \omega^0_i=\frac{1}{|E_i|}\sum_{uv\in E_i}\overline{Q}_{uv} \qquad \forall i \in [r].
\end{align*}
Our implementation of Algorithm~\ref{alg:ScoringAlg} in \texttt{R} can be found in the supplementary materials. 

\begin{algorithm} \label{alg:ScoringAlg}
	$ $ \\[1ex]
	\textbf{Input:} {An empirical variogram $ \overline{\Gamma} $, a graph $ G $, an edge coloring and a start value $\omega^0$.}
	
	\noindent
	\textbf{Initialize:} Compute the initial scores $S(\omega^0)$.
	
	\noindent
	\textbf{Computation:} Iterate \eqref{eq:scoring_step} until the absolute sum of the scores is below a given threshold.
	
	\noindent
	\textbf{Output:} A parameter value $\widehat{\omega}$.
\end{algorithm}
As Algorithm~\ref{alg:ScoringAlg} is based on Newton's method, its convergence depends heavily on the starting point, see also \cite{hojsgaard2012graphical}.

\subsubsection{Scoring algorithm for maximum dual log-likelihood estimation}

A simple Newton's method for solving the second step~(S2), that is the dual log-likelihood maximization problem under symmetry constraints, updates a starting point $\nu^0$ such that $\nu^{m+1}=\nu^m+\widecheck{I}(\nu^m)^{-1}\widecheck{S}(\nu^m)$.
Similar to the RCON algorithm above, we add a term $\widecheck{S}(\nu^m)\widecheck{S}(\nu^m)^T$ for stability, resulting in an iteration step
\begin{align}    
\nu^{m+1}=\nu^m+\left(\widecheck{I}(\nu^m)+\widecheck{S}(\nu^m)\widecheck{S}(\nu^m)^T\right)^{-1}\widecheck{S}(\nu^m),\label{eq:scoring_step_recipr}
\end{align}
which gives Algorithm~\ref{alg:ScoringAlg_recipr}. An implementation of this algorithm in \texttt{R} is available in the supplementary material.

\begin{algorithm} 
	\label{alg:ScoringAlg_recipr}
	$ $ \\[1ex]
	\textbf{Input:} {A step~(S1) solution $\widehat{Q}$, a graph $ G $, an edge coloring and a start value $\nu^0$.}
	
	\noindent
	\textbf{Initialize:} Compute the initial dual scores $\widecheck{S}(\nu^0)$.
	
	\noindent
	\textbf{Computation:} Iterate \eqref{eq:scoring_step_recipr} until the absolute sum of the dual scores is below a given threshold.
	
	\noindent
	\textbf{Output:} A parameter value $\widecheck{\nu}$.
\end{algorithm}

\subsection{Surrogate likelihood inference for H\"usler--Reiss RCON and RVAR models}
As discussed at the beginning of this section, the methodology for signed Laplacian-constrained Gaussian graphical models can be adapted for H\"usler--Reiss models. 
Assume independent observations $\y_1,\ldots,\y_n$ of a H\"usler--Reiss random vector $\Y$.
For a given RCON model, we are interested in finding the maximum of $\ell\left(\omega,\mathbf{t}(\widetilde{\Gamma})\right)$
where $\mathbf{t}(\widetilde{\Gamma})$ is a linear function of the sample variogram $\widetilde{\Gamma}$ of the corresponding $\W$, see \eqref{eq:summary_stat}.
Equivalently, for a given RVAR model the summary statistic in the first step~(S1) is a linear function of the sample variogram.
Because we can not directly observe $\W$, we cannot compute $\widetilde{\Gamma}$ directly from $\y_1,\ldots,\y_n$. Therefore we approximate the sample variogram using the empirical variogram of \cite{EV2020}.

Define index sets $ \mathcal{I}_k=\{i\in[n]:y_{ik}>0\} $ that describe all observations where the $ k $th entry exceeds zero. 
When $|\mathcal{I}_k|\ge 2$, let $ (\z_i)_{i\in\mathcal{I}_k} $ be the corresponding subset of centered observations, i.e.~$ \z_i:=\y_{i}-\frac{1}{|\mathcal{I}_k|}\sum_{\ell\in\mathcal{I}_k}\y_{\ell} $.
This yields the sample variogram  
\[\overline{\Gamma}^{(k)}_{ij}=\frac{1}{|\mathcal{I}_k|}\sum_{\ell\in\mathcal{I}_k} (z_{\ell i} - z_{\ell j})^2 \]
of the $ k $th extremal function $ \W^k $.
For $|\mathcal{I}_k|< 2$, we set $ \overline{\Gamma}^{(k)}=\mathbf{0} $.
The empirical variogram of \cite{EV2020} therefore equals
\[\overline{\Gamma}=\frac{1}{d}\sum_{k=1}^{d}\overline{\Gamma}^{(k)},\]
assuming that $|\mathcal{I}_k|\ge 2$ for at least one $ k\in [d] $.
We define the surrogate MLE of a H\"usler--Reiss RCON model given an empirical variogram $\overline{\Gamma}$ as
the solution $\widehat{\omega}$ of \eqref{eq:mlecol} given the sufficient statistic $\mathbf{t}(\overline{\Gamma})$.

Equivalently, we define a surrogate mixed dual estimator for a H\"usler--Reiss RVAR model given an empirical variogram $\overline{\Gamma}$ as the solution of the mixed dual estimator from Section~\ref{sec:learn_RVAR}. Hereby we use the empirical variogram $\overline{\Gamma}$ as sufficient statistic in the optimization problem~\eqref{eq:step1} for step~(S1), and solve the step~(S2) problem to obtain a surrogate mixed dual estimate $\widehat{\nu}$.

\section{Application}\label{sec:flights}
We demonstrate the application of the RCON and RVAR models on the flight delays data set of \citet{HES2022}, which is available in the \texttt{R} package \texttt{graphicalExtremes} \citep{graphicalExtremes}.
\subsection{Data}
The data set consists of total daily delays in minutes for airports in the United States. From this data set, we select all daily observations for airports with at least 2000 incoming and outgoing annual flights between 2005 and 2020.
We remove any day with missing observations, which results in $n=5347$ daily observations for $d=79$ airports.
The data is split into a training data set that contains the observations between 2005 and 2010 and a validation data set for observations between 2011 to 2020.

\subsection{Clustering}
Following \citet{HES2022} we separate the airports into smaller groups where simultaneous extreme events can be expected, for example due to extreme weather phenomena.
The airports are clustered using the $k$-medoids clustering algorithm \texttt{PAM} \citep{KR1990} that is implemented in the \texttt{R} package \texttt{cluster} \citep{cluster}.
Using the empirical variogram for a probability threshold of $p=0.85$ as dissimilarity measure, we split the airports into four clusters that are fairly geographically disjoint.
This yields a southern cluster with 12 airports, a western cluster (including Hawaii and Alaska) with 21 airports, a central cluster with 22 airports and an eastern cluster (including Puerto Rico) with 24 airports.
These are displayed in Figure~\ref{fig:clusters}, with on average at least monthly flight connections displayed as edges between the nodes.
\begin{figure}
    \centering
    \includegraphics[scale=0.7]{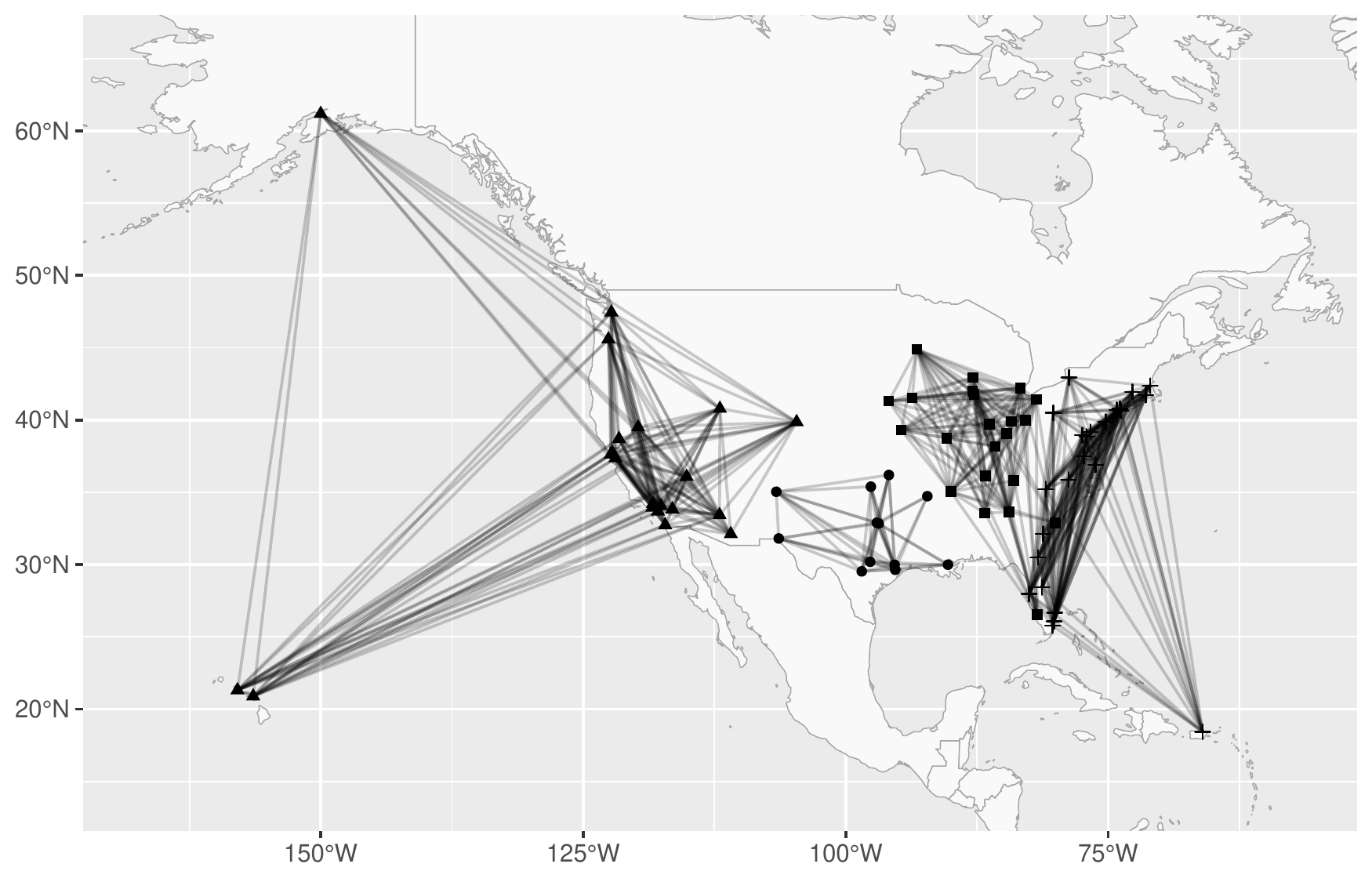}
    \caption{Flight connections for on average at least monthly flights within the four clusters.}
    \label{fig:clusters}
\end{figure}

For each cluster, we compute the empirical variogram for the training data set for a probability threshold of $p=0.85$.
Assuming a H\"usler--Reiss distribution, we further derive the surrogate maximum likelihood estimate under \EMTPtwo of \citet{REZ2021} using the \texttt{emtp2} algorithm available in the \texttt{R} package \texttt{graphicalExtremes} \citep{graphicalExtremes}.
This yields both a parameter and a graph estimate for this data set for each cluster. 
As an alternative structure learning approach, we test the \texttt{eglearn} structure learning algorithm of \citet{ELV2021}. 
We evaluate both \texttt{eglearn} and \texttt{emtp2} on the validation data set and observe that the latter performs better on every cluster, even without requiring a tuning parameter. 
Therefore we will consider the \texttt{emtp2} graph estimates as the underlying graphical model for RCON and RVAR inference.

\subsection{RCON inference}
For each cluster, our interest is in parameter reduction via an RCON model while maintaining a good fit on the validation data set.
For coloring the graph estimates, we apply the $k$-medoids algorithm from the \texttt{R} package \texttt{cluster} \citep{cluster}.
Using the empirical H\"usler--Reiss precision matrix $\overline{\Theta}$ on the edges of the graph estimates as dissimilarity weights, we separate the edges into $k=1,\ldots,25$ edge color classes.
For each $k$ we find the surrogate maximum likelihood estimate for the corresponding colored graphical models using Algorithm~\ref{alg:ScoringAlg}.
The resulting log-likelihoods on the validation data set for each coloring are presented in Figure~\ref{fig:likelihoods} together with the likelihoods of the original \EMTPtwo estimate and the emprical variogram.
\begin{figure}
    \centering
\begin{subfigure}{0.45\textwidth}
         \centering
         \includegraphics[scale=0.4]{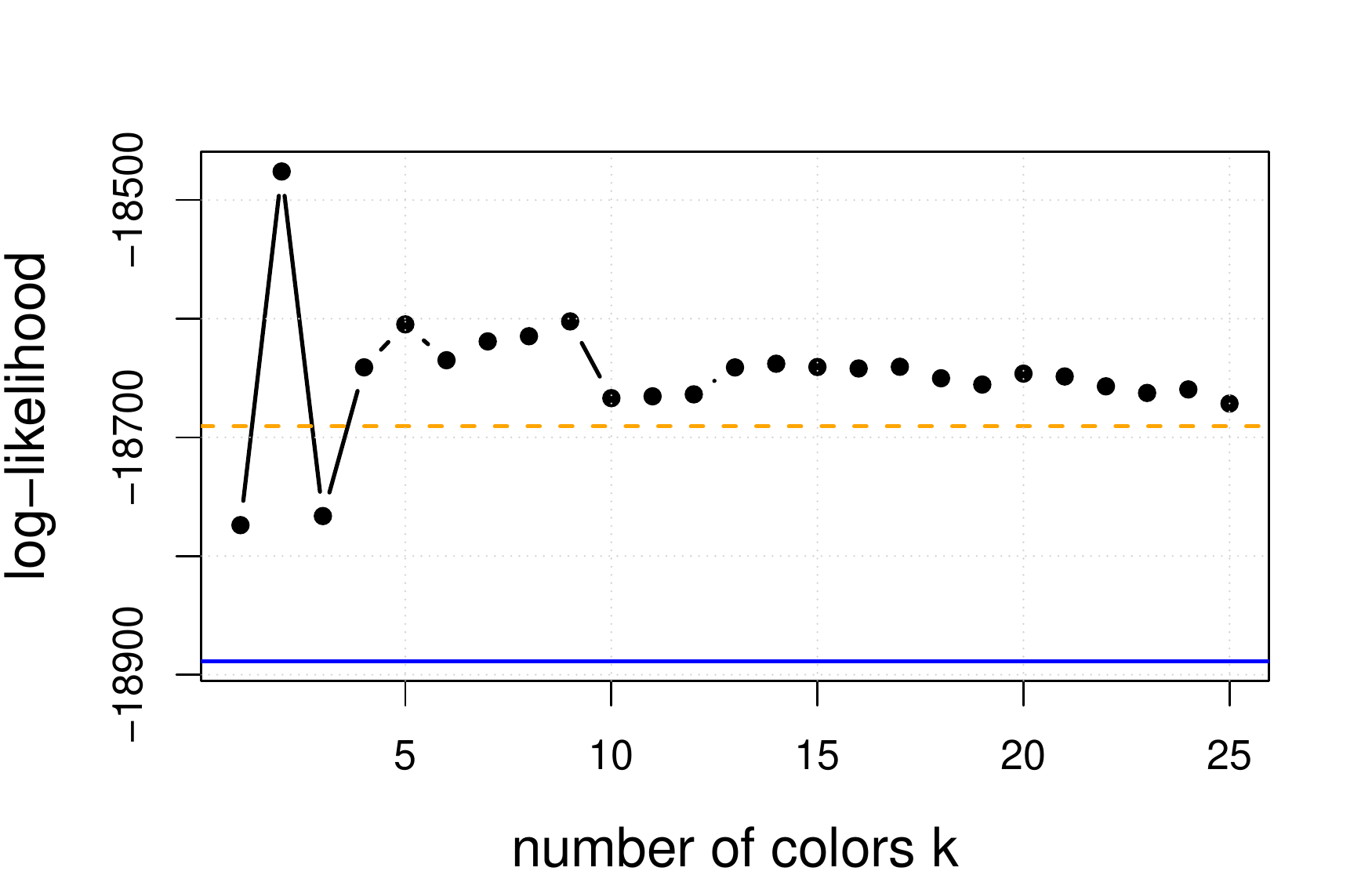}
         \caption{Southern cluster likelihoods}
 \end{subfigure}   
 \begin{subfigure}{0.45\textwidth}
         \centering
         \includegraphics[scale=0.4]{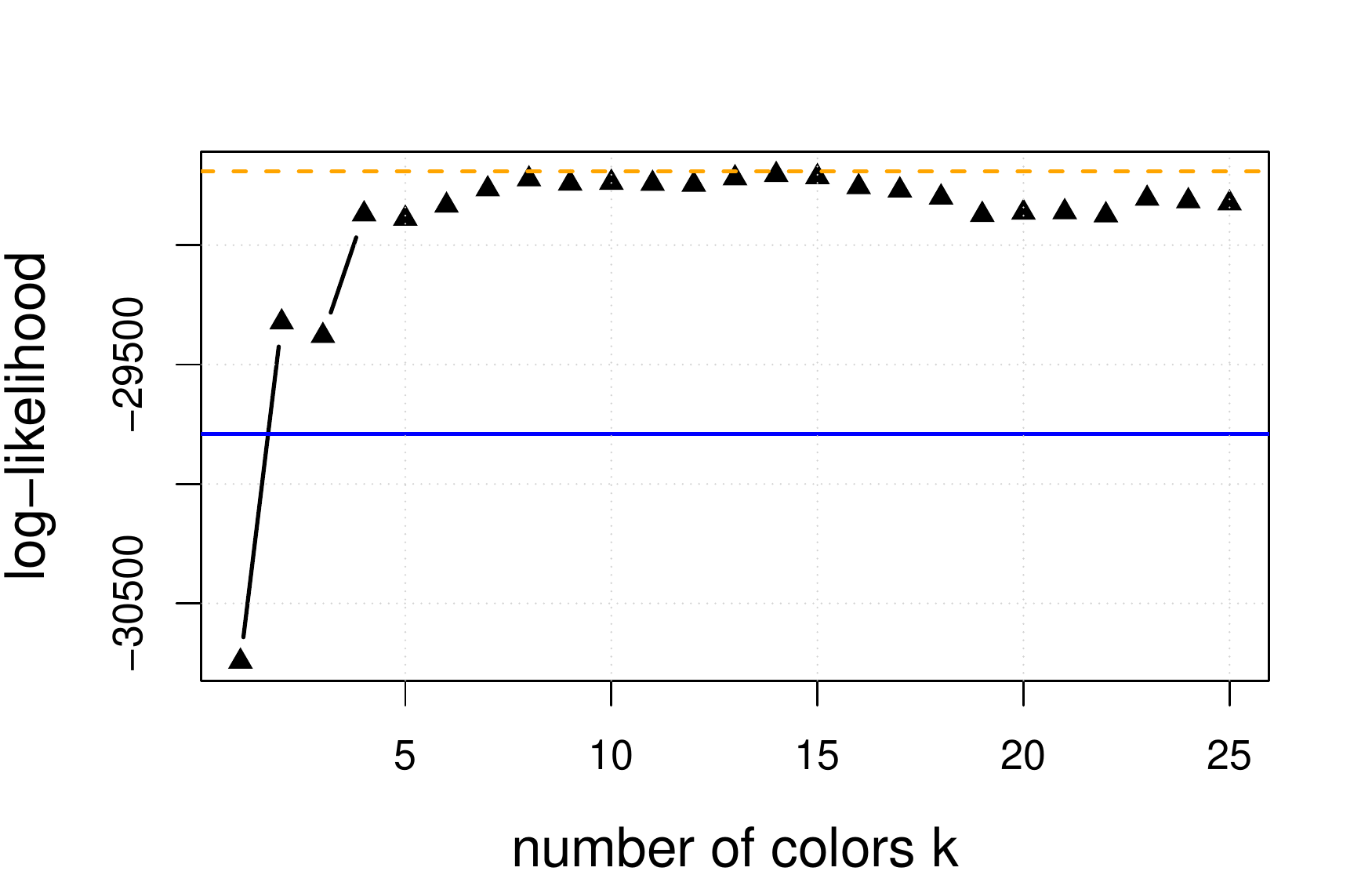}
         \caption{Western cluster likelihoods}
 \end{subfigure}    
 \begin{subfigure}{0.45\textwidth}
         \centering
         \includegraphics[scale=0.4]{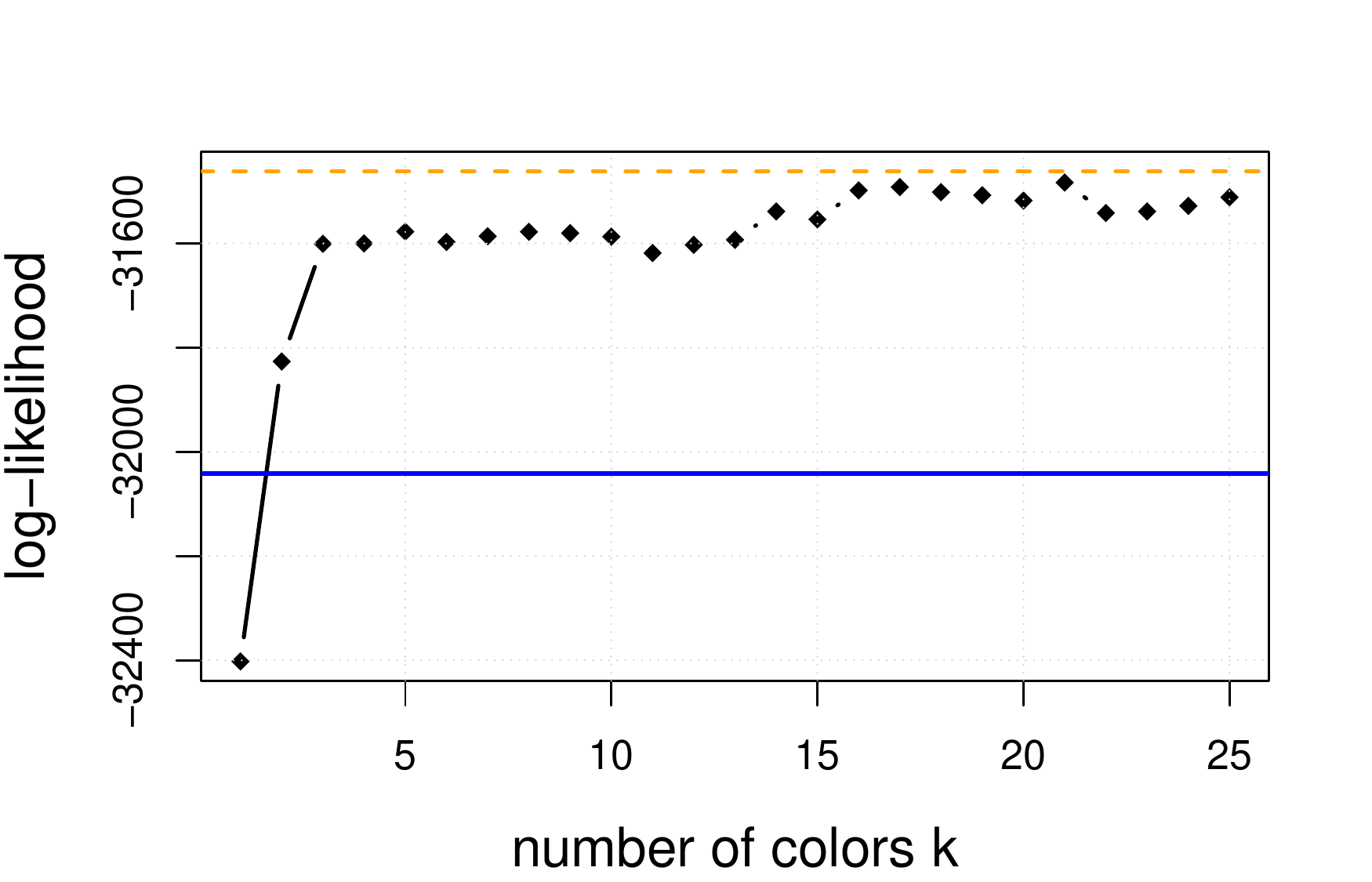}
         \caption{Central cluster likelihoods}
 \end{subfigure}   
 \begin{subfigure}{0.45\textwidth}
         \centering
         \includegraphics[scale=0.4]{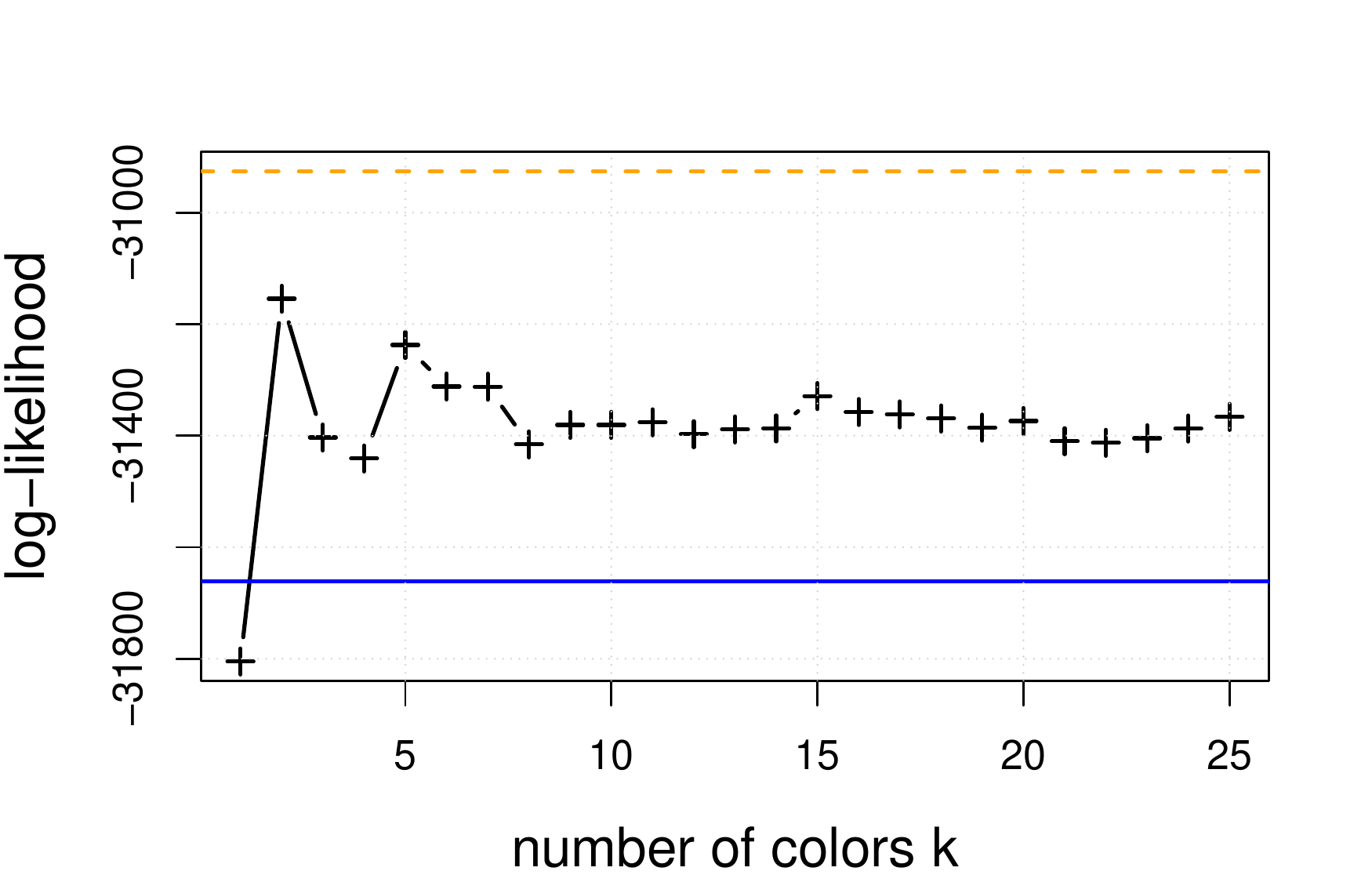}
         \caption{Eastern cluster likelihoods}
 \end{subfigure}    
     \caption{Log-likelihoods on the validation data for the surrogate maximum likelihood estimates for the  H\"usler--Reiss RCON models. The dashed line shows the log-likelihood for the \EMTPtwo estimate, and the solid line the log-likelihood of the empirical variogram.}
    \label{fig:likelihoods}
\end{figure}
We observe that for the central and western clusters the colored graphical models perform similarly to the \EMTPtwo estimate, while for the southern cluster, the colored estimate clearly outperforms the \EMTPtwo estimate even for a small number of colors. In comparison, for the eastern cluster the performance of the colored model is lower.
Comparing the best RCON model with respect to the validation data set with the \EMTPtwo estimates, we observe a large reduction of the number of parameters, see Table~\ref{tab:parameters}.
Figure~\ref{fig:colored_graphs} displays the colored graphs with the highest log-likelihood on the validation data for each cluster.
\begin{figure}
    \centering
\begin{subfigure}{0.45\textwidth}
         \centering
         \includegraphics[scale=0.35]{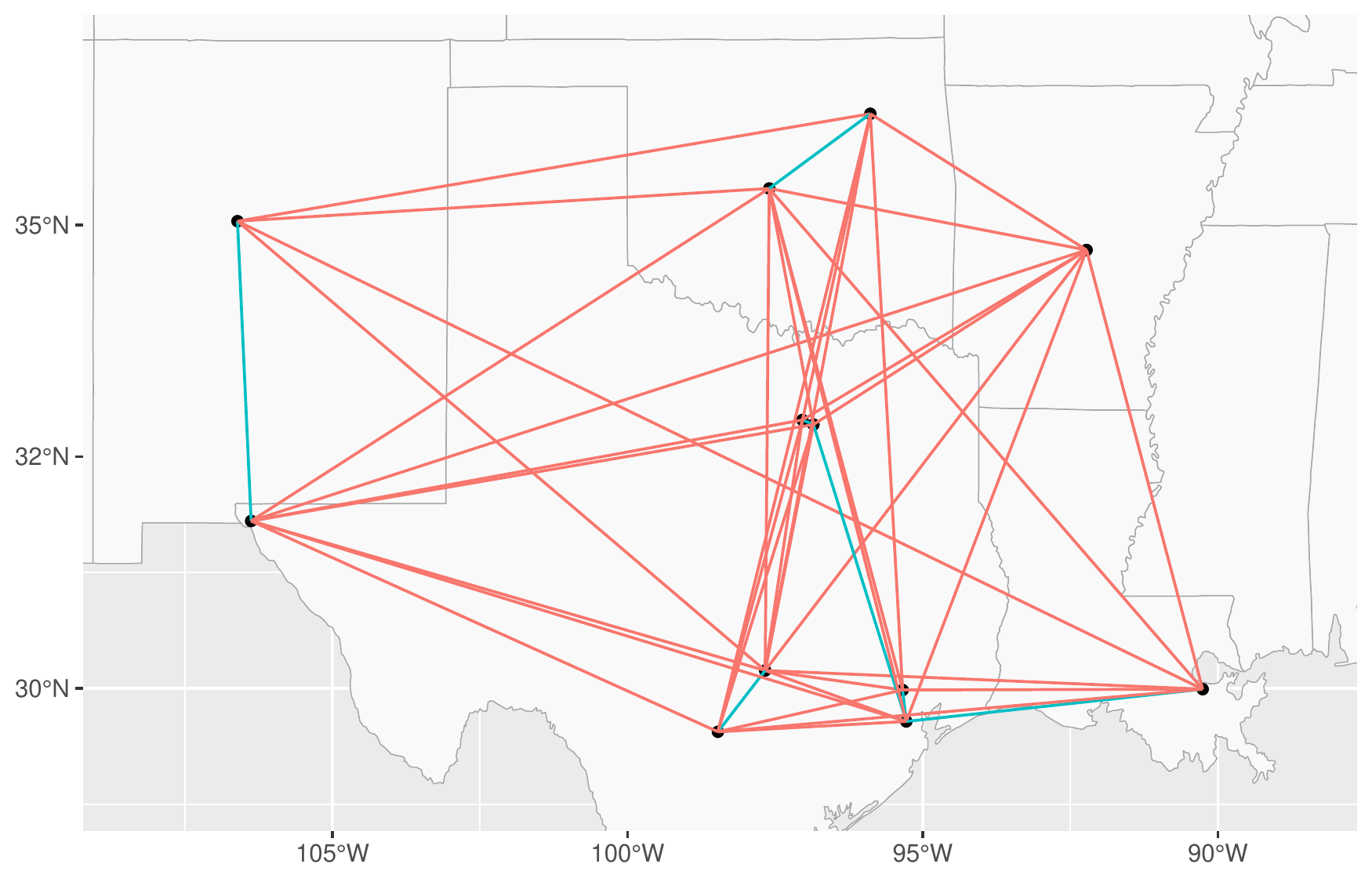}
         \caption{Southern cluster}
 \end{subfigure}   
 \begin{subfigure}{0.45\textwidth}
         \centering
         \includegraphics[scale=0.35]{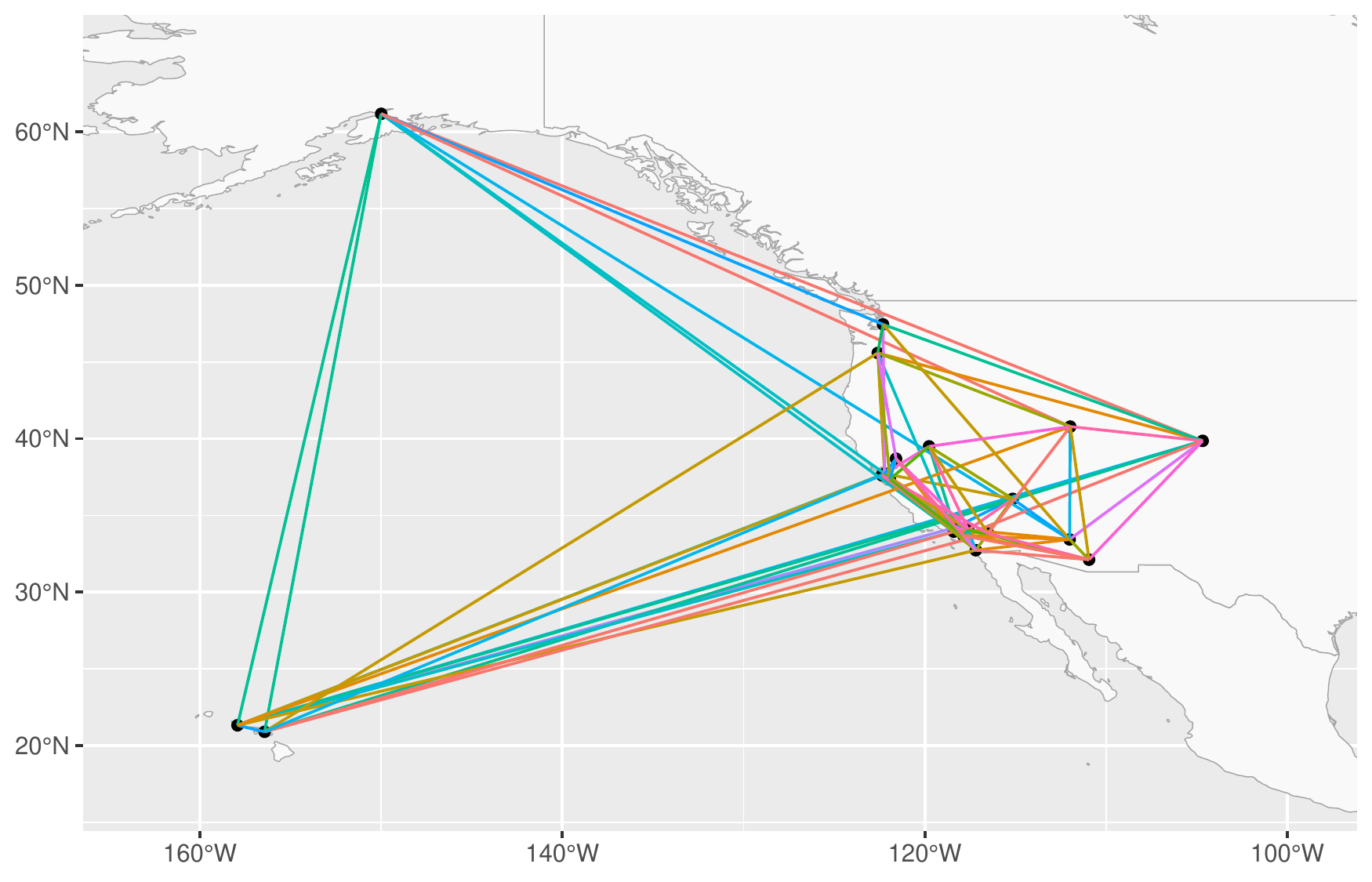}
         \caption{Western cluster}
 \end{subfigure}    
 \begin{subfigure}{0.45\textwidth}
         \centering
         \includegraphics[scale=0.35]{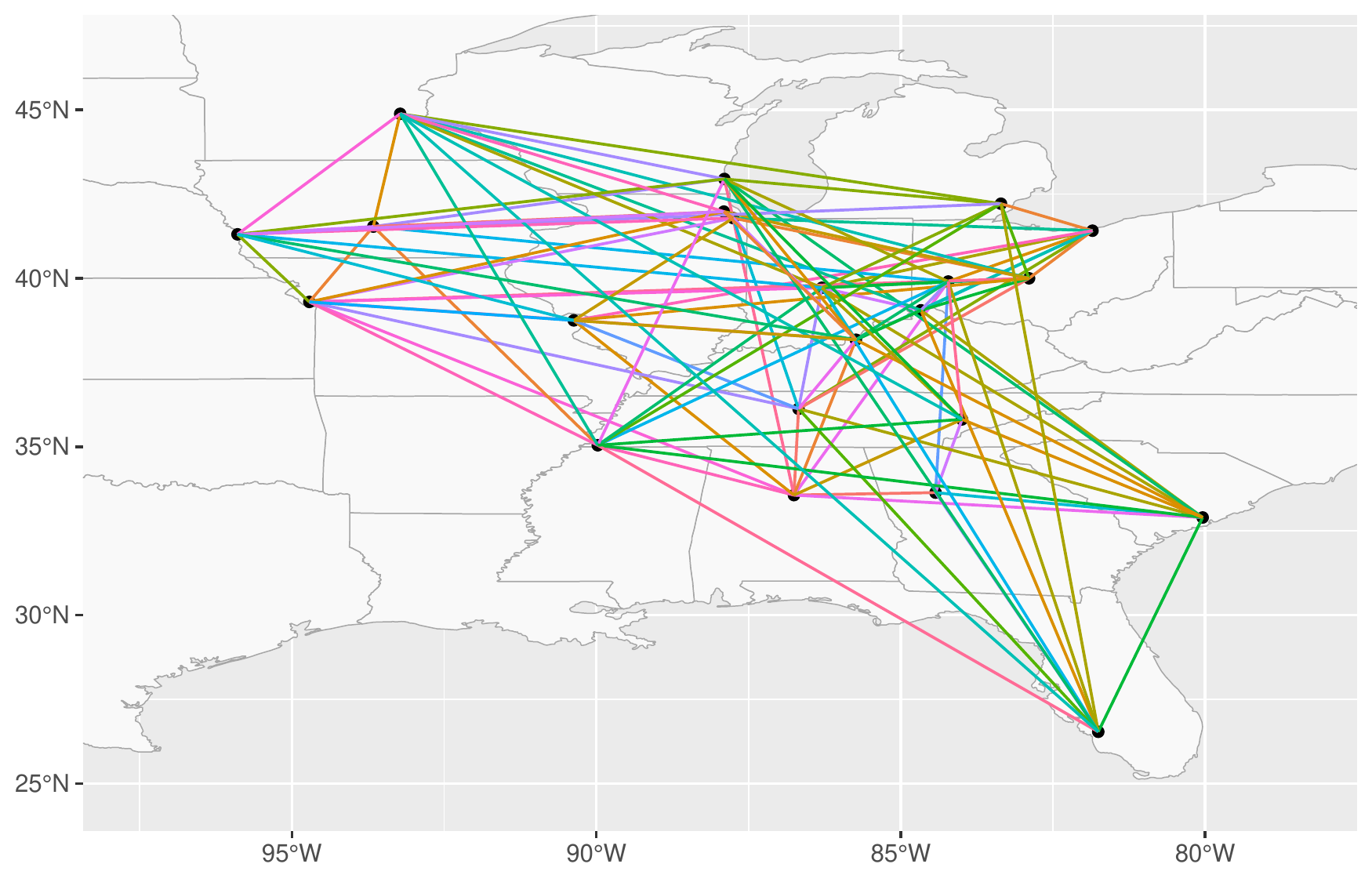}
         \caption{Central cluster}
 \end{subfigure}   
 \begin{subfigure}{0.45\textwidth}
         \centering
         \includegraphics[scale=0.35]{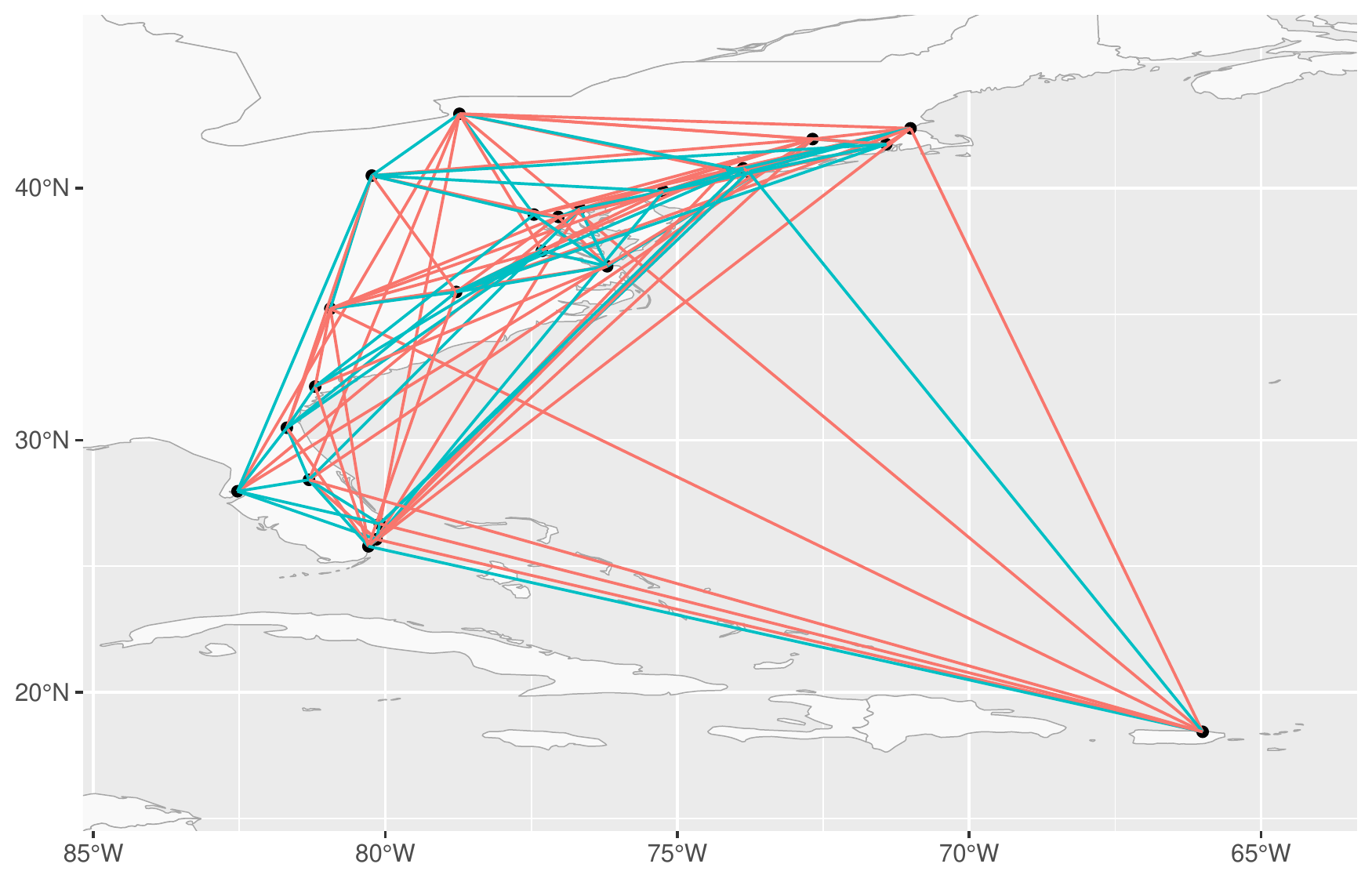}
         \caption{Eastern cluster}
 \end{subfigure}    
     \caption{Colored graphs with highest likelihood on the validation data for each cluster}
    \label{fig:colored_graphs}
\end{figure}
While the best colored graphs for the western and southern cluster have 14 and 22 different edge color classes, we observe in Figure~\ref{fig:likelihoods} that for both clusters we can have decent performance for small $k$.
The colored graph for the western cluster with $k=4$ and the central cluster with $k=3$ are shown in Figure~\ref{fig:colored_graphs2}.
\begin{figure}
    \centering
\begin{subfigure}{0.45\textwidth}
         \centering
         \includegraphics[scale=0.35]{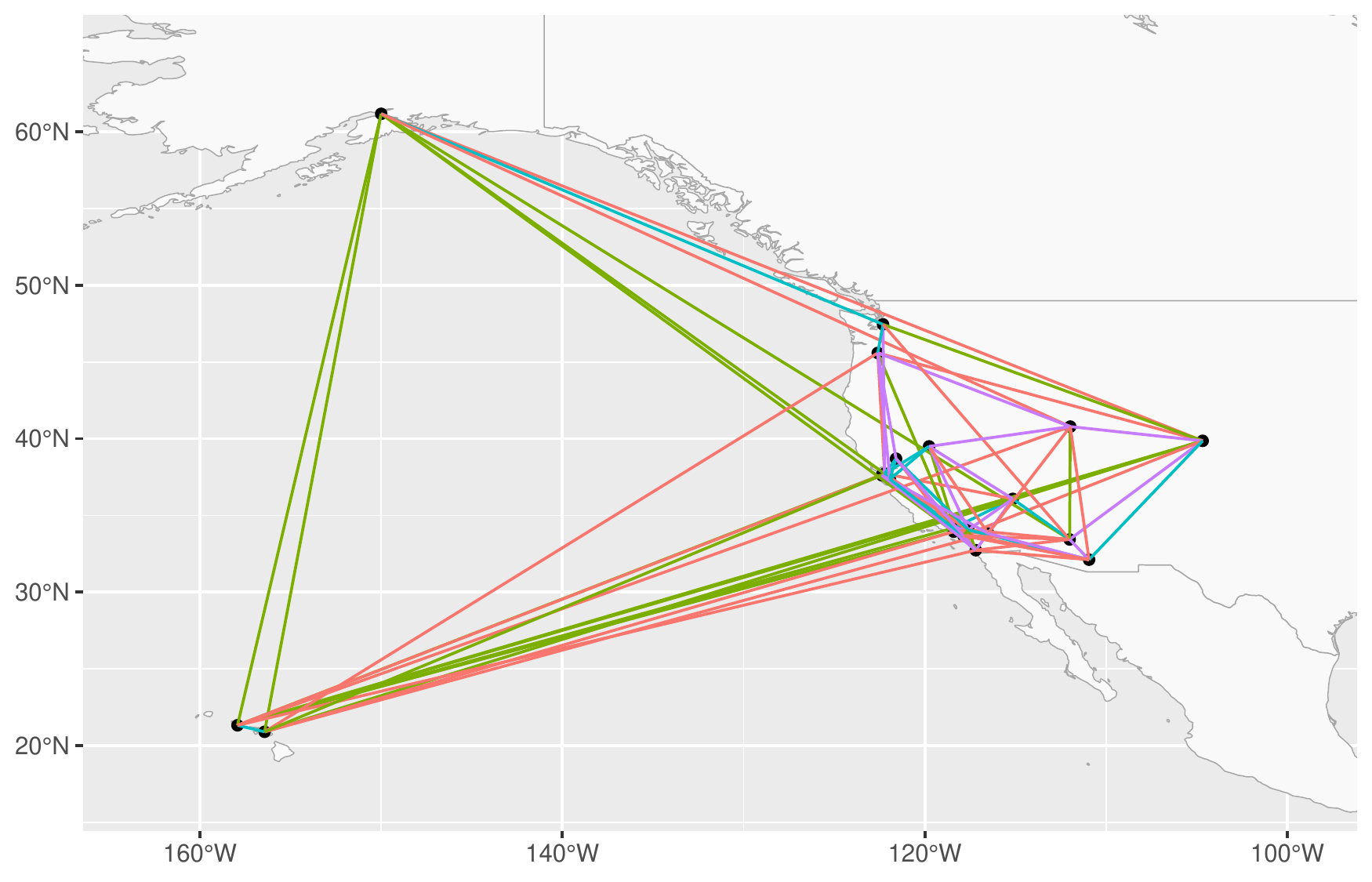}
         \caption{Western cluster ($k=4$)}
 \end{subfigure}   
 \begin{subfigure}{0.45\textwidth}
         \centering
         \includegraphics[scale=0.35]{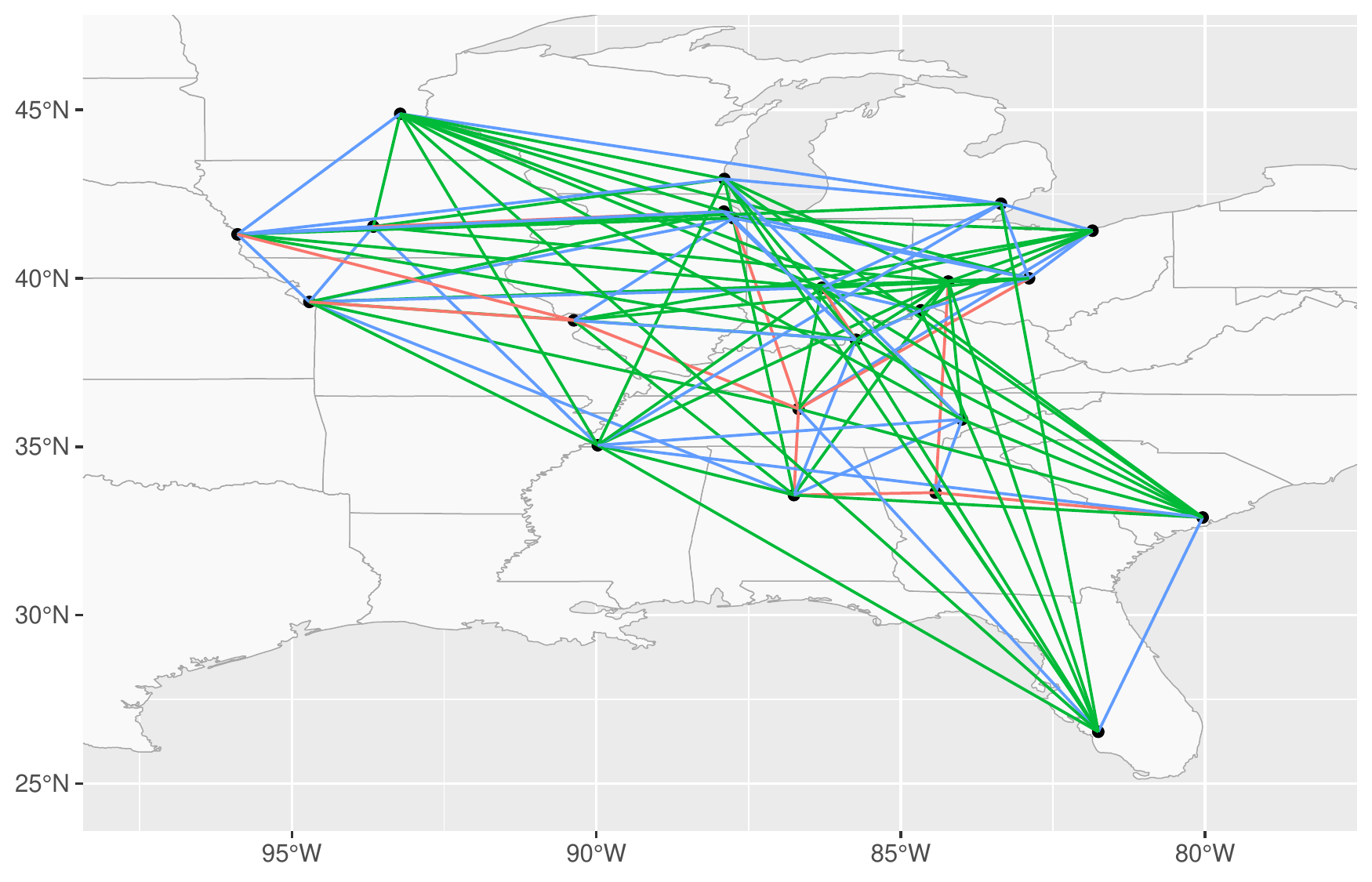}
         \caption{Central cluster($k=3$)}
 \end{subfigure}    
     \caption{Western and central cluster for small $k$ with reasonable performance.}
    \label{fig:colored_graphs2}
\end{figure}

\begin{table}[ht]
\centering
\begin{tabular}{rrrrr}
  \hline
 & Airports & nb par EMTP2 & nb par best RCON & nb par best RVAR \\ 
  \hline
Southern & 12 & 45 & 2 & 5\\ 
  Western & 21 & 91 & 14 & 10\\ 
  Central & 22 & 105 & 22 & 1\\ 
  Eastern & 24 & 101 & 2 & 3\\ 
   \hline
\end{tabular}
\caption{Parameter reduction for best colored models}\label{tab:parameters}
\end{table}

\subsection{RVAR inference}

We test the RVAR model for the flight delays data, where we use the \EMTPtwo estimator as graph estimate for each cluster.
Note that the \EMTPtwo estimator equals the surrogate MLE for the estimated graph, see \citet{REZ2021}.
This automatically yields the step~(S1) estimate $\widehat{Q}$ in \eqref{eq:step1}, as this is simply the surrogate MLE.
To assign the edge coloring, we apply a $k$-medoids clustering algorithm using the empirical variogram as dissimilarity measure for $k=1,\ldots,25$ for each cluster.
For the second step~(S2) as in \eqref{eq:step2}, for each edge coloring we compute a parameter estimate $\widehat{\nu}$ using our implementation in \texttt{R} of the reciprocal scoring algorithm (see Algorithm~\ref{alg:ScoringAlg_recipr}).
The resulting estimates are evaluated on the validation data set via the H\"usler--Reiss log-likelihood.
Figure~\ref{fig:likelihoods_RVAR} shows the H\"usler--Reiss log-likelihood on the validation data set for each RVAR estimate for each cluster compared to the log-likelihoods of the \EMTPtwo estimate and the empirical variogram.
We note that as the \EMTPtwo estimate is also the step~(S1) estimate, this particurlaly indicates the effects of the second step~(S2).

\begin{figure}
    \centering
\begin{subfigure}{0.45\textwidth}
         \centering
         \includegraphics[scale=0.4]{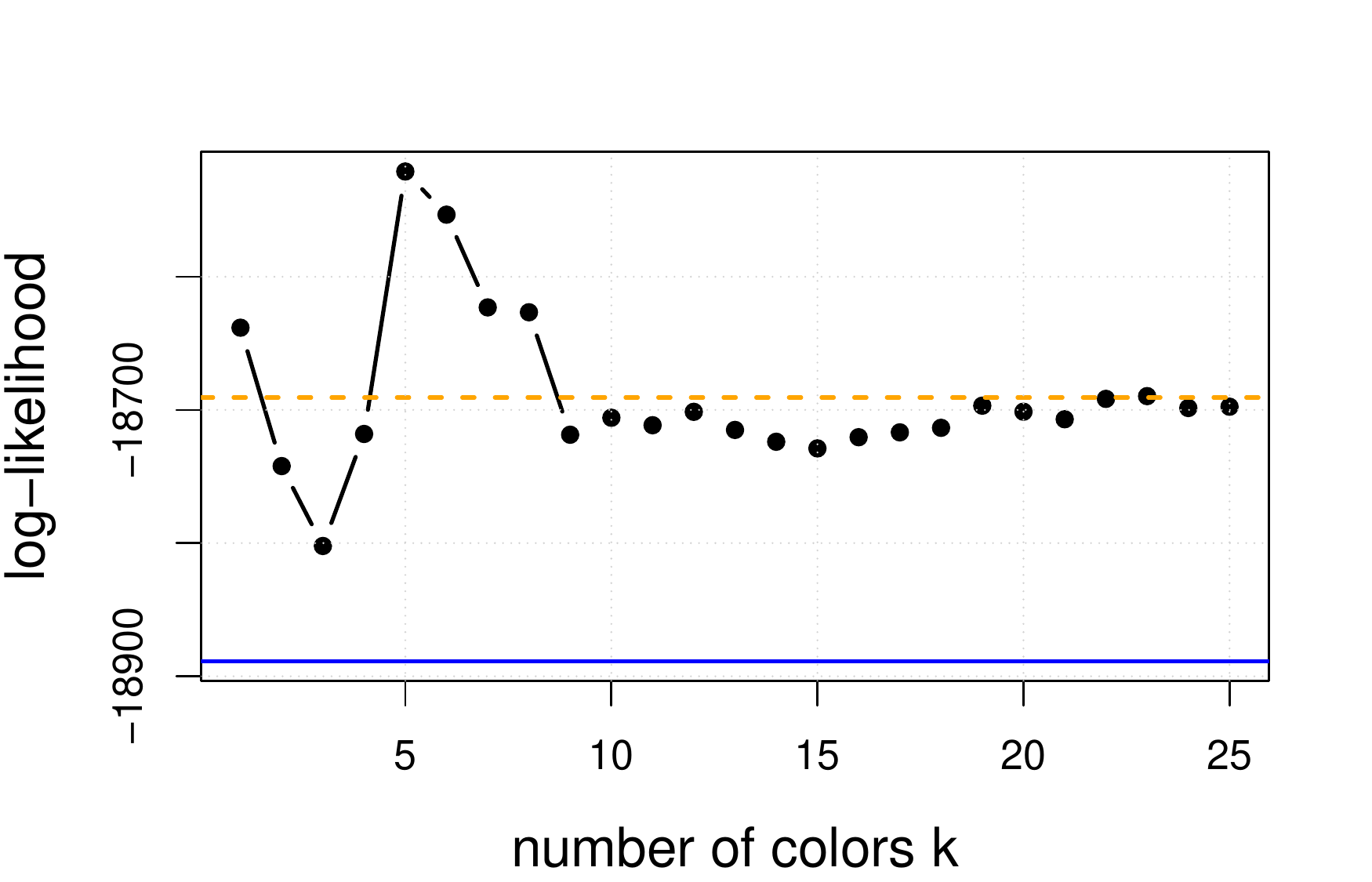}
         \caption{Southern cluster likelihoods}
 \end{subfigure}   
 \begin{subfigure}{0.45\textwidth}
         \centering
         \includegraphics[scale=0.4]{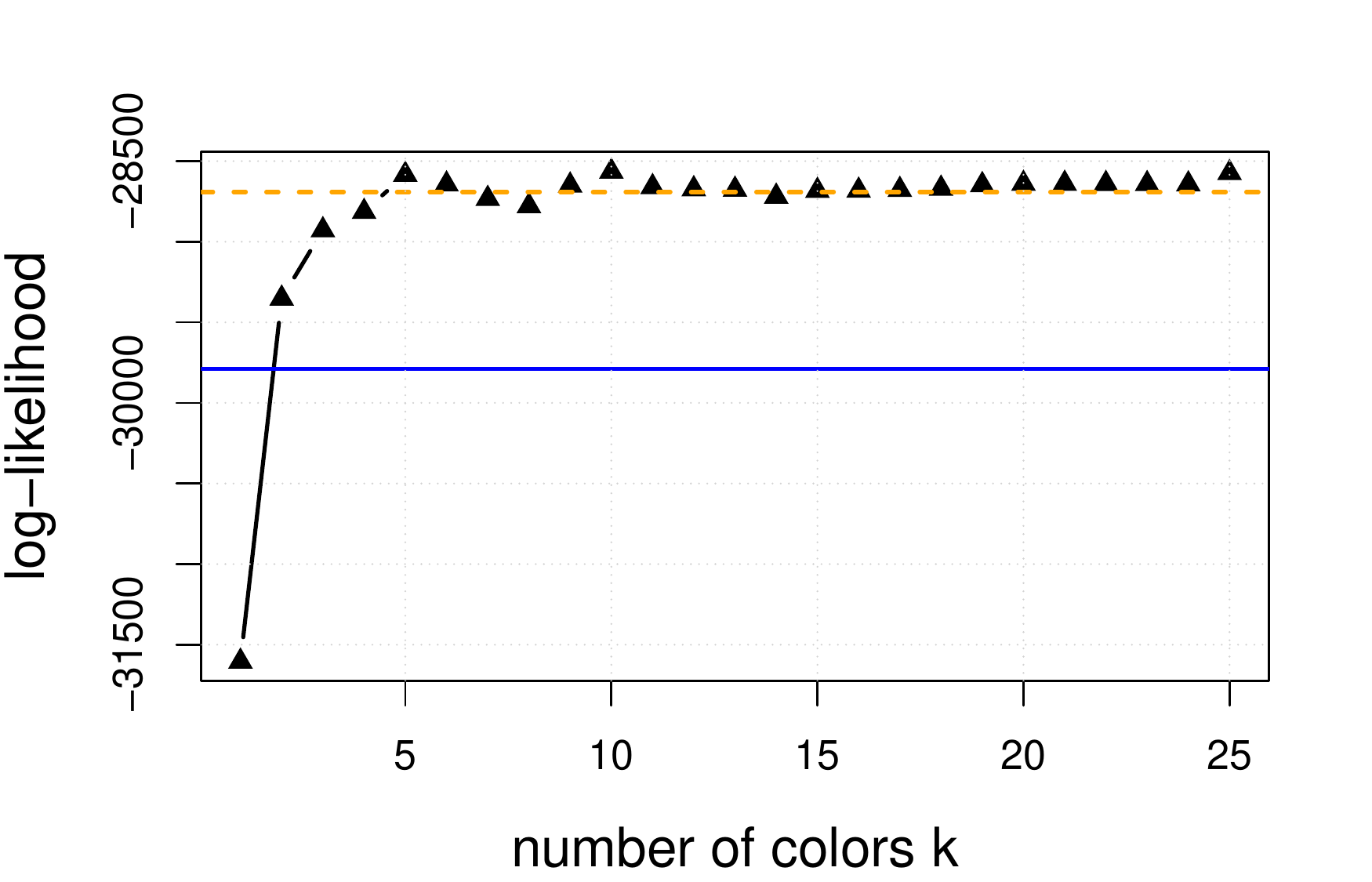}
         \caption{Western cluster likelihoods}
 \end{subfigure}    
 \begin{subfigure}{0.45\textwidth}
         \centering
         \includegraphics[scale=0.4]{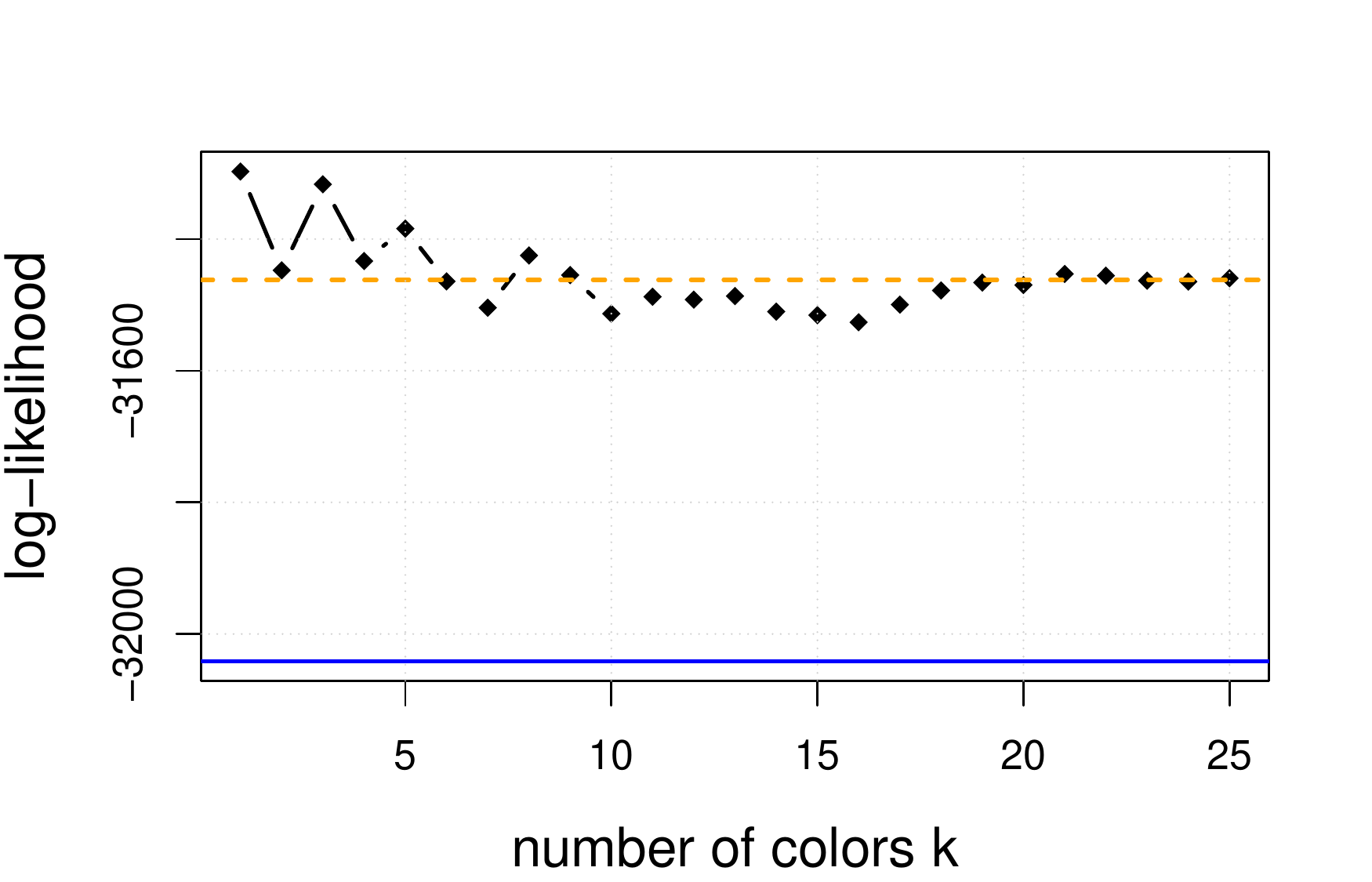}
         \caption{Central cluster likelihoods}
 \end{subfigure}   
 \begin{subfigure}{0.45\textwidth}
         \centering
         \includegraphics[scale=0.4]{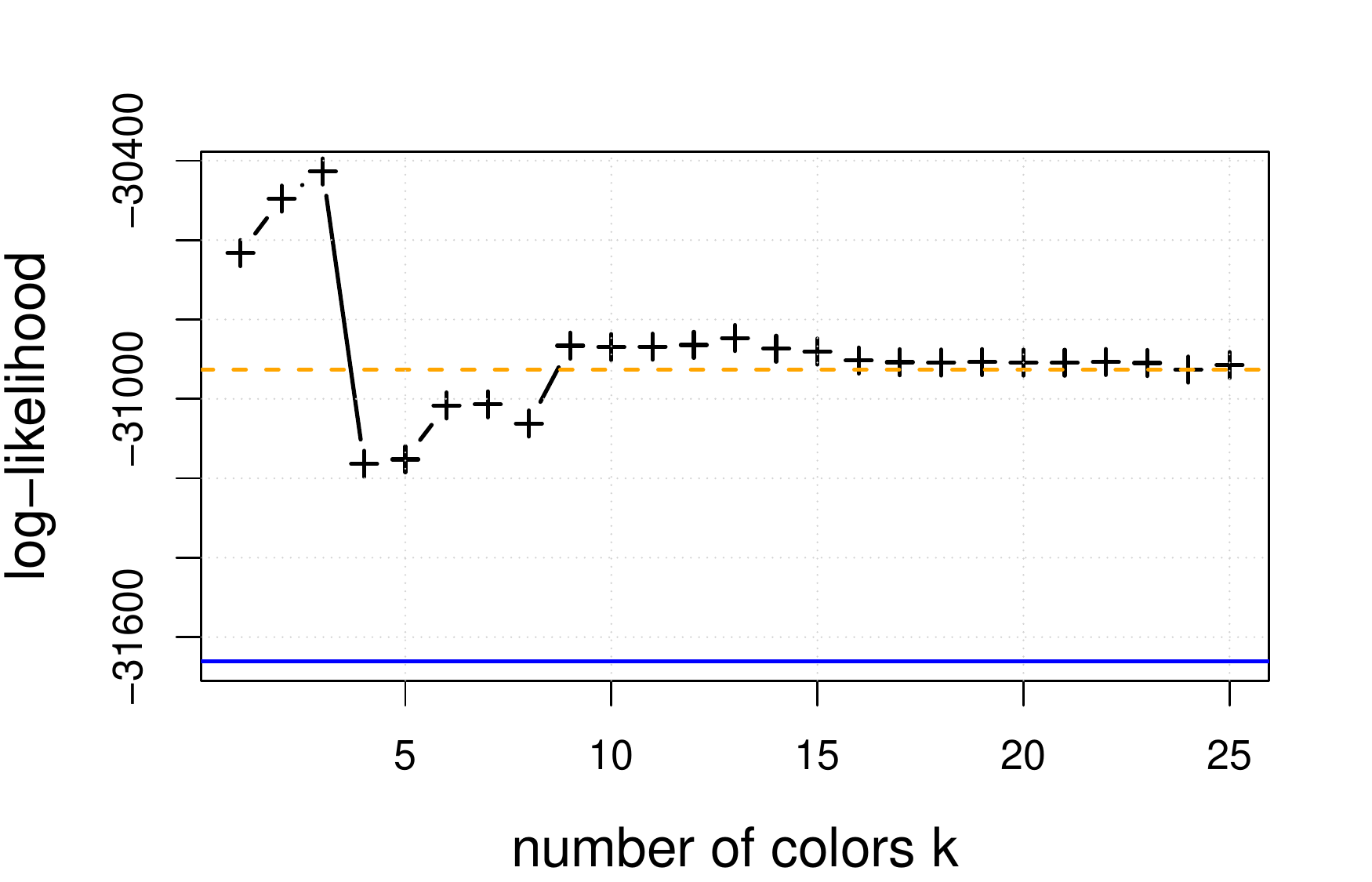}
         \caption{Eastern cluster likelihoods}
 \end{subfigure}    
     \caption{Log-likelihoods on the validation data for the surrogate maximum likelihood estimates for the RVAR H\"usler--Reiss graphical models. The dashed line shows the log-likelihood for the \EMTPtwo estimate, and the solid line the log-likelihood of the empirical variogram.}
    \label{fig:likelihoods_RVAR}
\end{figure}
We observe that the best RVAR estimates clearly outperform the \EMTPtwo estimate for the southern, central and eastern cluster for very small $k$, see Figure~\ref{fig:likelihoods_RVAR} and Table~\ref{tab:parameters}. For the western cluster, we obtain comparable results to the \EMTPtwo model, but with a similarly strong parameter reduction.

Figure~\ref{fig:colored_graphs_RVAR} displays the RVAR graphs with the highest log-likelihood on the validation data for each cluster.
Interestingly, the RVAR model for the central cluster with the best performance is achieved for $k=1$, leading to a one-parametric model and a monochromatic graphical model. Furthermore, the RVAR model for the eastern cluster ($k=3$) shows an interesting geographical structure, where all edges connecting to Puerto Rico belong to the same edge color class.
A similar behavior is observed in Figure~\ref{fig:colored_graphs_RVAR5} for the western cluster for $k=5$ for flight connections to Hawaii. 
We interpret this as strong evidence for parameter symmetry with respect to flight connections between continental and island airports.
\begin{figure}
    \centering
\begin{subfigure}{0.45\textwidth}
         \centering
         \includegraphics[scale=0.35]{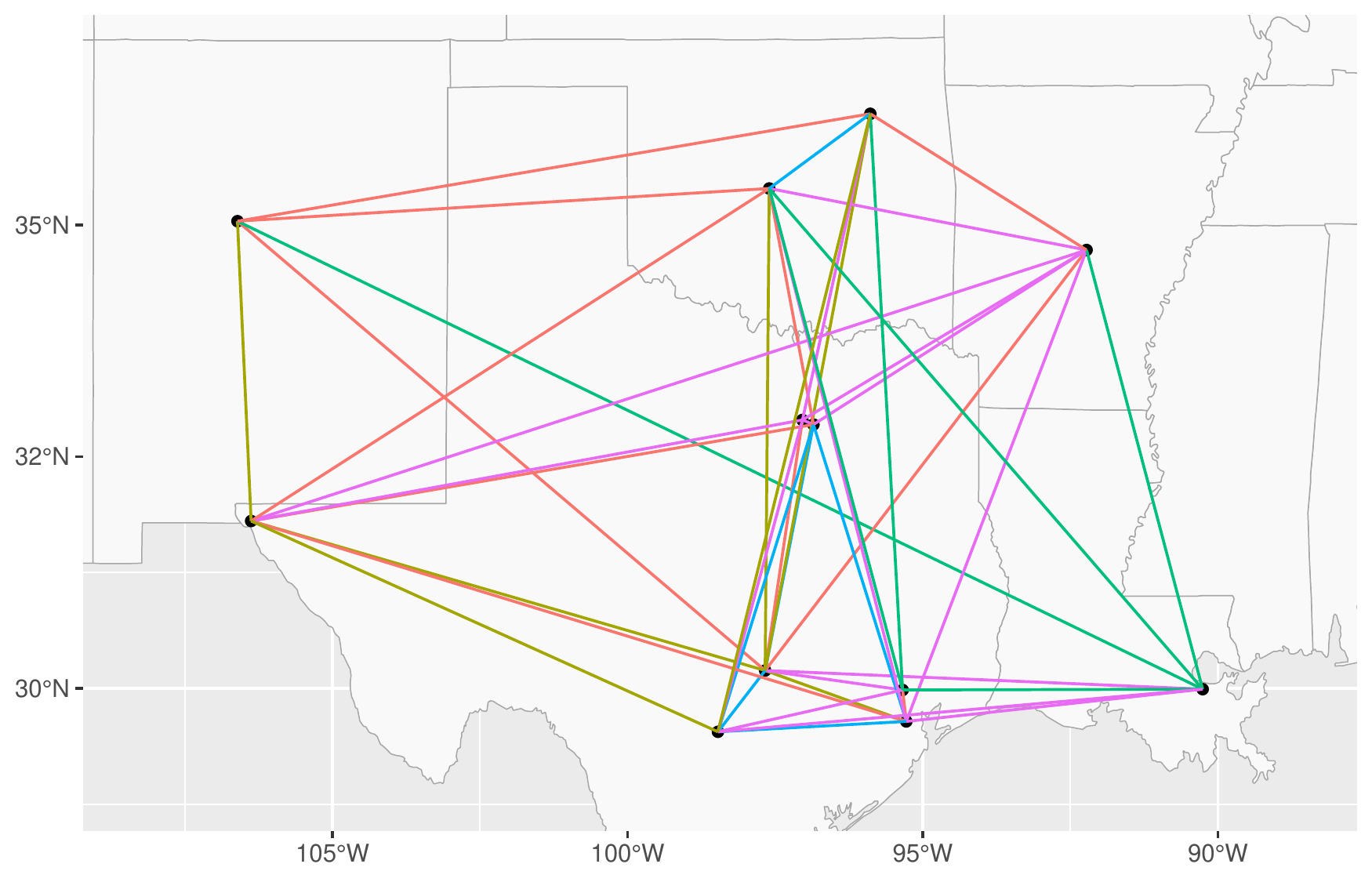}
         \caption{Southern cluster}
 \end{subfigure}   
 \begin{subfigure}{0.45\textwidth}
         \centering
         \includegraphics[scale=0.35]{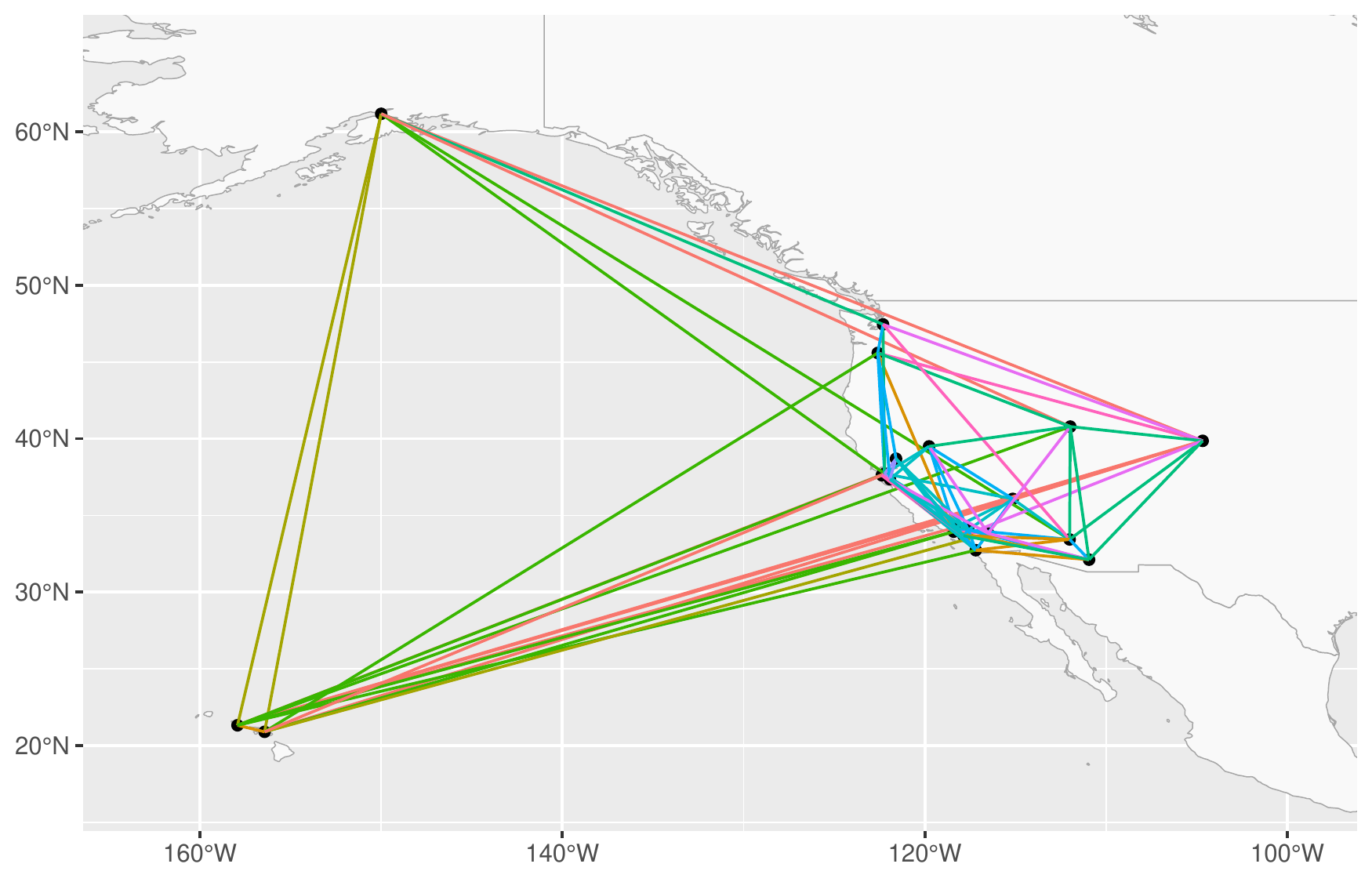}
         \caption{Western cluster}
 \end{subfigure}    
 \begin{subfigure}{0.45\textwidth}
         \centering
         \includegraphics[scale=0.35]{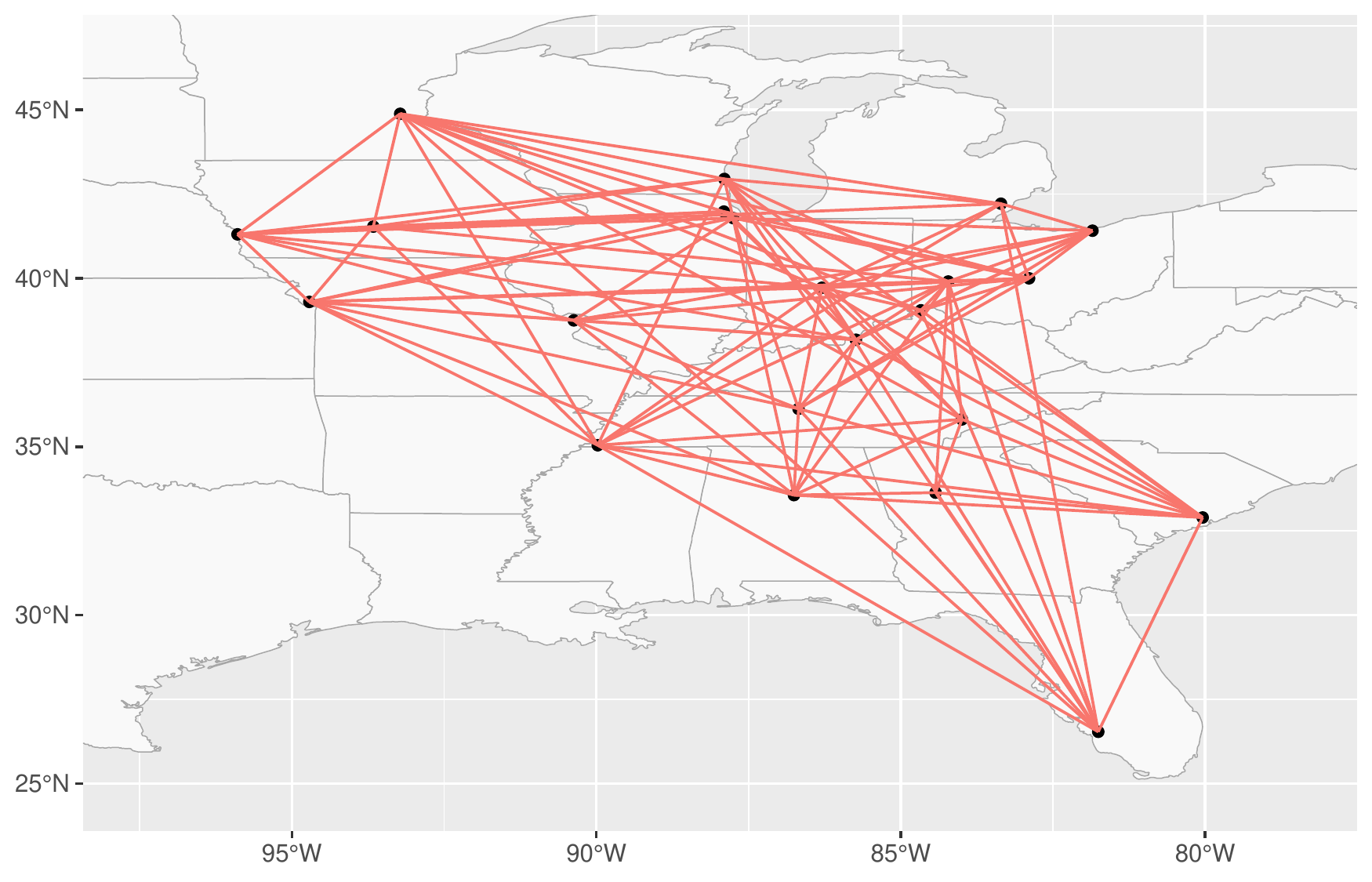}
         \caption{Central cluster}
 \end{subfigure}   
 \begin{subfigure}{0.45\textwidth}
         \centering
         \includegraphics[scale=0.35]{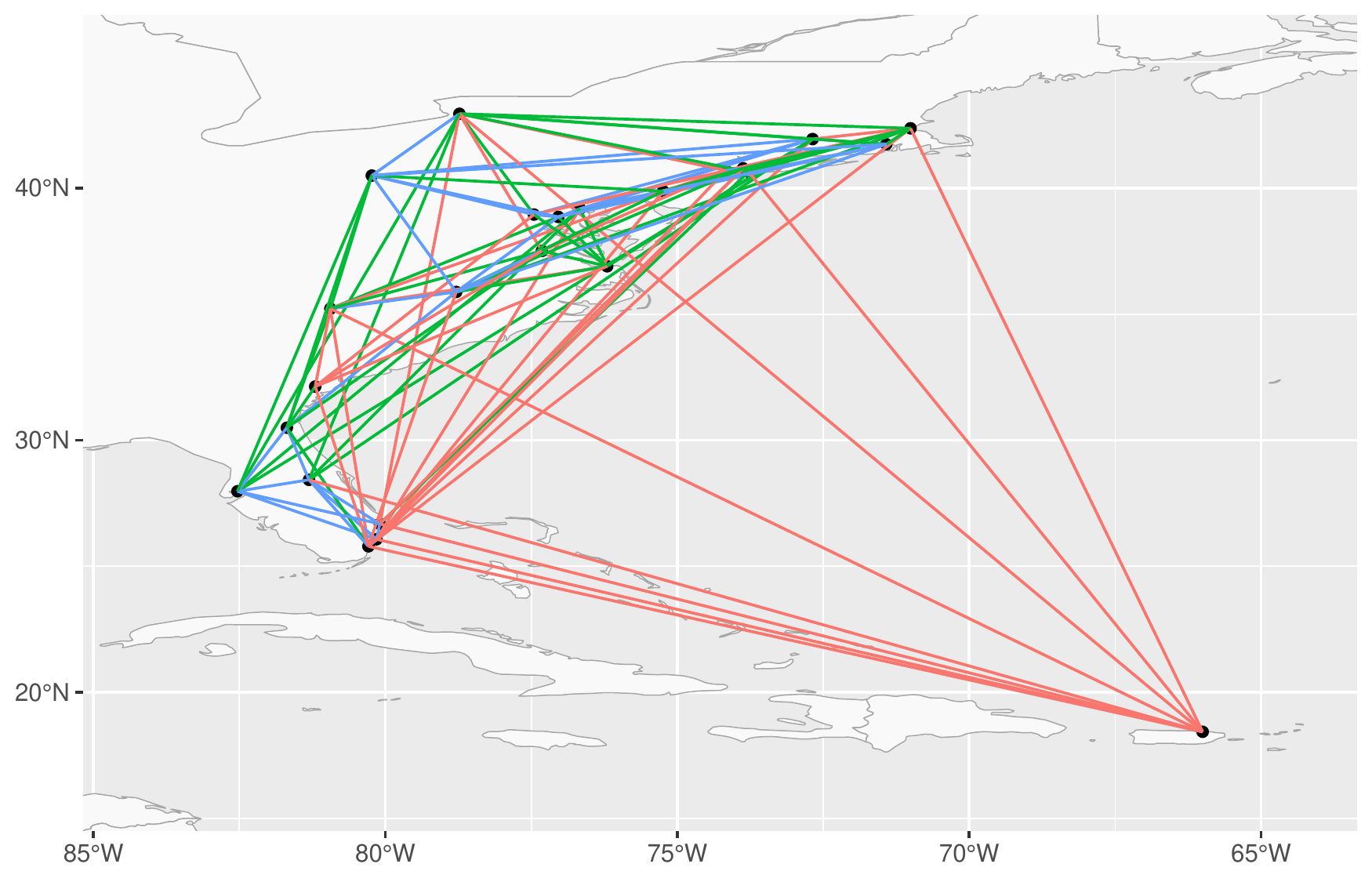}
         \caption{Eastern cluster}
 \end{subfigure}    
     \caption{RVAR graphs with highest likelihood on the validation data for each cluster}
    \label{fig:colored_graphs_RVAR}
\end{figure}

\begin{figure}
    \centering
 \begin{subfigure}{0.45\textwidth}
         \centering
         \includegraphics[scale=0.35]{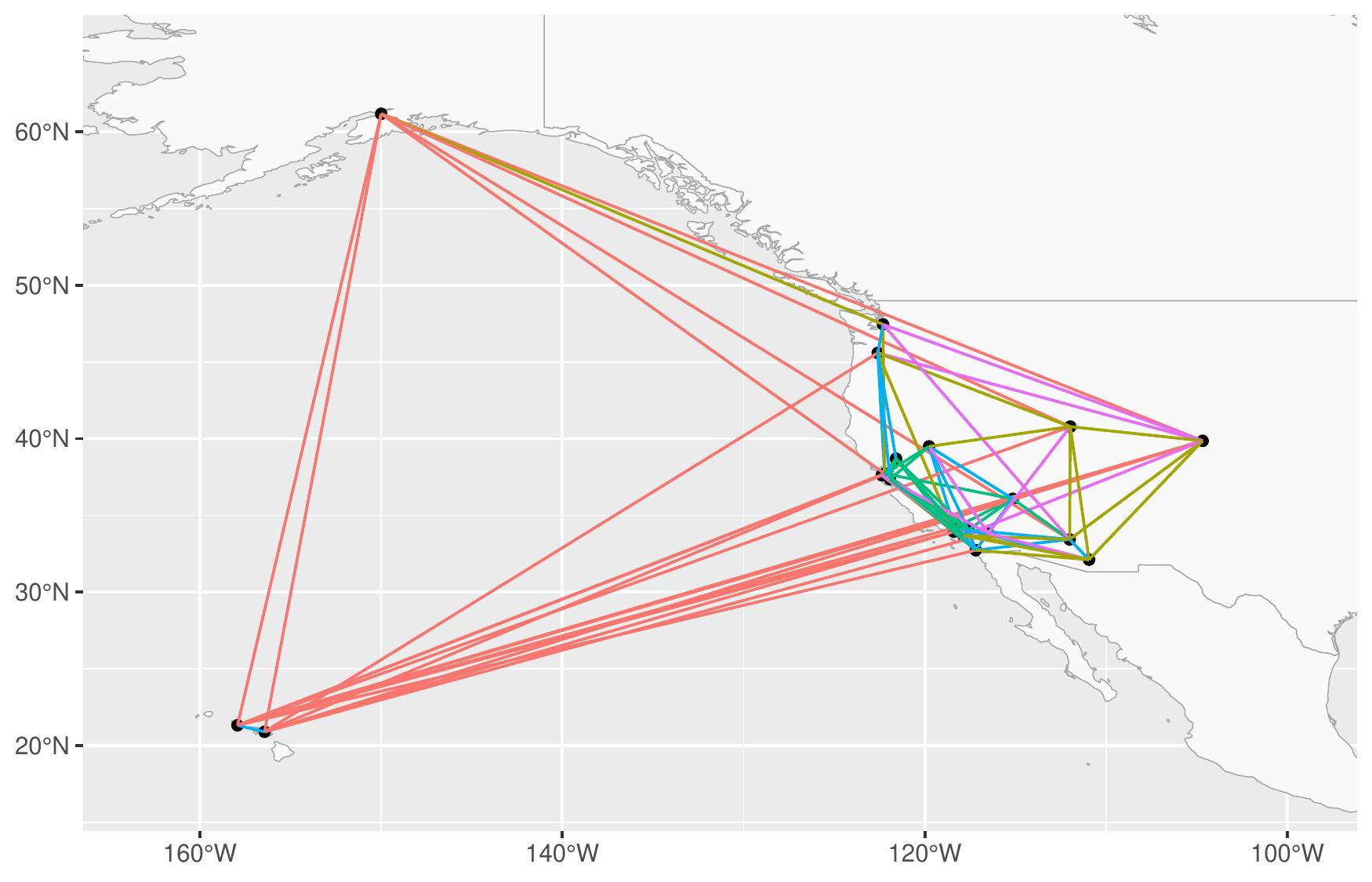}
    \end{subfigure} 
    \caption{Colored western cluster for $k=5$.}
    \label{fig:colored_graphs_RVAR5}
\end{figure}

In summary, we conclude that the RVAR model provides an excellent fit for all clusters, while providing for a strong parameter dimension reduction.

\subsection*{Acknowledgements}
Frank R\"ottger was supported by the Swiss National Science Foundation (Grant 186858).
The authors thank Sebastian Engelke and Manuel Hentschel for helpful discussions and technical support. 
Alexandros Grosdos was supported by the European Research Council (ERC) under the European Union’s Horizon 2020 research and innovation programme (Grant agreement No.~883818).
\appendix

\section{Proofs}
\subsection{Proof of Lemma~\ref{lem:infmat}}\label{prf:infmat}
The information matrix for exponential families is the negative of the Hessian matrix of the log-partition function, or as the negative of the Jacobian matrix of the score functions. We compute
\begin{align*}
    \frac{\partial}{\partial Q_{st}}\Gamma_{uv}&=-\frac{\partial\Sigma^{(v)}_{uu}}{\partial Q_{st}}= (\Sigma^{(v)}_{u})^T \frac{\partial\Theta^{(v)}}{\partial Q_{st}}\Sigma^{(v)}_{u}
\end{align*}
Now, it holds that
\[ \frac{\partial\Theta^{(v)}}{\partial Q_{st}}=\begin{cases} (e_s-e_t)(e_s-e_t)^T, &s,t\neq u,v,\\
e_s e_s^T, & t=v.
\end{cases} \]
Hence, we obtain
\begin{align*}\frac{\partial}{\partial Q_{st}}\Gamma_{uv}&=-\Sigma_{us}^{(v)}(\Sigma_{us}^{(v)}-\Sigma_{ut}^{(v)})+\Sigma_{ut}^{(v)}(\Sigma_{ut}^{(v)}-\Sigma_{us}^{(v)})\\
&=-(\Sigma_{us}^{(v)}-\Sigma_{ut}^{(v)})^2\\
&=-\frac{1}{4}(\Gamma_{sv}-\Gamma_{us}-\Gamma_{tv}+\Gamma_{ut})^2.\\
\end{align*}
This implies with the multivariable chain rule that
\begin{align*}
    I(\omega)_{ij}&= -\frac{\partial}{\partial \omega_j}S_i(\omega)\\
    &= -\frac{1}{2}\sum_{uv \in E: \;\lambda(uv)=i} \frac{\partial}{\partial \omega_j}\Gamma_{uv}(\omega)\\
    &=  -\frac{1}{2}\sum_{uv \in E: \;\lambda(uv)=i}\sum_{st \in E: \;\lambda(st)=j} \frac{\partial}{\partial Q_{st}}\Gamma_{uv}(\omega)\\
    &=\frac{1}{8}\sum_{uv \in E: \;\lambda(uv)=i}\sum_{st \in E: \;\lambda(st)=j} (\Gamma_{sv}(\omega)-\Gamma_{us}(\omega)-\Gamma_{tv}(\omega)+\Gamma_{ut}(\omega))^2,
\end{align*} 
which proves the claim. \qed

\subsection{Proof of Lemma~\ref{lem:infmat_recipr}}\label{prf:infmat_recipr}

We compute using Lemma~\ref{lem:CMinv} that
\begin{align*}
    \frac{\partial}{\partial \Gamma_{st}}Q_{uv} &= -\frac{\partial }{\partial \Gamma_{st}}\Theta_{uv}\\
    &=(\Theta_{u1},\ldots,\Theta_{ud},r_u)\frac{\partial \CM(\Gamma)}{\partial \Gamma_{st}}\begin{pmatrix}
        \Theta_{v1}\\
        \vdots\\
        \Theta_{vd}\\
        r_v\\
    \end{pmatrix}\\
    &=-\frac{1}{2} (\Theta_{ut}\Theta_{vs}+\Theta_{us}\Theta_{vt}).
\end{align*}
We therefore obtain
\begin{align*}
    \widecheck{I}(\nu)_{ij}&=-\frac{\partial}{\partial \nu_j}\widecheck{S}_i(\nu)\\
    &=-\frac{1}{2}\sum_{uv \in E: \;\lambda(uv)=i} \frac{\partial}{\partial \nu_j}Q_{uv}(\nu)\\
    &=-\frac{1}{2}\sum_{uv \in E: \;\lambda(uv)=i}\sum_{st \in E: \;\lambda(st)=j}\frac{\partial}{\partial \Gamma_{st}}Q_{uv}(\nu)\\
    &=\frac{1}{4}\sum_{uv \in E: \;\lambda(uv)=i}\sum_{st \in E: \;\lambda(st)=j}(\Theta_{ut}(\nu)\Theta_{vs}(\nu)+\Theta_{ut}(\nu)\Theta_{vs}(\nu)),
\end{align*}
as needed. \qed

\bibliographystyle{chicago}
\bibliography{bibliography}

\end{document}